\documentclass[12pt]{amsart}
\usepackage{amsfonts,amssymb,amscd,amsmath,enumerate,verbatim}
\usepackage[latin1]{inputenc}
\usepackage{amscd}
\usepackage{latexsym}
\usepackage{mathptmx}
\input xy
\xyoption{all}
%

%
%
%
%
%
\def\NZQ{\mathbb}               

\def\ZZ{{\NZQ Z}}
\def\RR{{\NZQ R}}

%
%
\def\frk{\mathfrak}               

\def\Phi{{\frk N}}
%
%
\def\ab{{\bold a}}
\def\bb{{\bold b}}
\def\cb{{\bold c}}
\def\eb{{\bold e}}
\def\tb{{\bold t}}
\def\vb{{\bold v}}
\def\ub{{\bold u}}
\def\wb{{\bold w}}
\def\xb{{\bold x}}


%
\def\opn#1#2{\def#1{\operatorname{#2}}} 
%
\opn\chara{char} 
\opn\length{\ell} 
\opn\pd{pd} 
\opn\rk{rk}
\opn\projdim{proj\,dim} 
\opn\injdim{inj\,dim} 
\opn\rank{rank}
\opn\depth{depth} 
\opn\grade{grade} 
\opn\height{height}
\opn\embdim{emb\,dim} 
\opn\codim{codim}

\opn\Tr{Tr} 
\opn\bigrank{big\,rank}
\opn\superheight{superheight}
\opn\lcm{lcm}
\opn\trdeg{tr\,deg}
\opn\reg{reg} 
\opn\lreg{lreg} 
\opn\ini{in} 
\opn\lpd{lpd}
\opn\size{size}
\opn\mult{mult}
\opn\dist{dist}
\opn\cone{cone}
\opn\lex{lex}
\opn\rev{rev}
%
\opn\div{div} \opn\Div{Div} \opn\cl{cl} \opn\Cl{Cl}
%
%
\opn\Spec{Spec} \opn\Supp{Supp} \opn\supp{supp} \opn\Sing{Sing}
\opn\Ass{Ass} \opn\Min{Min}
%
%
\opn\Ann{Ann} \opn\Rad{Rad} \opn\Soc{Soc}
%
%
\opn\Syz{Syz} \opn\Im{Im} \opn\Ker{Ker} \opn\Coker{Coker}
\opn\Am{Am} \opn\Hom{Hom} \opn\Tor{Tor} \opn\Ext{Ext}
\opn\End{End} \opn\Aut{Aut} \opn\id{id} \opn\ini{in}

\opn\nat{nat}
\opn\pff{pf}
\opn\Pf{Pf} \opn\GL{GL} \opn\SL{SL} \opn\mod{mod} \opn\ord{ord}
\opn\Gin{Gin}
\opn\Hilb{Hilb}\opn\adeg{adeg}\opn\std{std}\opn\ip{infpt}
\opn\Pol{Pol}
\opn\sat{sat}
\opn\Var{Var}
\opn\Gen{Gen}

%
%
\opn\aff{aff} \opn\con{conv} \opn\relint{relint} \opn\st{st}
\opn\lk{lk} \opn\cn{cn} \opn\core{core} \opn\vol{vol}
\opn\link{link} \opn\star{star}
\opn\gr{gr}

\def\Ac{{\mathcal A}}

\def\Mc{{\mathcal M}}
\def\Hc{{\mathcal H}}

\def\Ic{{\mathcal I}}
\def\Jc{{\mathcal J}}
\def\Gc{{\mathcal G}}
\def\Fc{{\mathcal F}}
\def\Oc{{\mathcal O}}
\def\Pc{{\mathcal P}}
\def\Qc{{\mathcal Q}}

\def\Cc{{\mathcal C}}

%
%

\def\pot#1#2{#1[\kern-0.28ex[#2]\kern-0.28ex]}

%
%
\opn\dirlim{\underrightarrow{\lim}}
\opn\inivlim{\underleftarrow{\lim}}
%
%
%

%
%
\let\to=\rightarrow

\def\Implies{\ifmmode\Longrightarrow \else
	\unskip${}\Longrightarrow{}$\ignorespaces\fi}
\def\implies{\ifmmode\Rightarrow \else
	\unskip${}\Rightarrow{}$\ignorespaces\fi}
\def\iff{\ifmmode\Longleftrightarrow \else
	\unskip${}\Longleftrightarrow{}$\ignorespaces\fi}

\let\:=\colon
\newtheorem{Theorem}{Theorem}[section]
\newtheorem{Lemma}[Theorem]{Lemma}

\newtheorem{Proposition}[Theorem]{Proposition}

\newtheorem*{acknowledgement}{Acknowledgment}
%
%
\let\epsilon\varepsilon
\let\phi=\varphi
\let\kappa=\varkappa
%
%
\textwidth=15cm \textheight=22cm \topmargin=0.5cm
\oddsidemargin=0.5cm \evensidemargin=0.5cm \pagestyle{plain}
%
%
\def\qed{\ifhmode\textqed\fi
	\ifmmode\ifinner\quad\qedsymbol\else\dispqed\fi\fi}
\def\textqed{\unskip\nobreak\penalty50
	\hskip2em\hbox{}\nobreak\hfil\qedsymbol
	\parfillskip=0pt \finalhyphendemerits=0}
\def\dispqed{\rlap{\qquad\qedsymbol}}

%
\opn\dis{dis}
\opn\height{height}
\opn\dist{dist}
\def\pnt{{\raise0.5mm\hbox{\large\bf.}}}

\opn\Lex{Lex}
\opn\conv{conv}

%


%
%
%
\begin{document}
\title{Reflexive polytopes arising from partially ordered sets and perfect graphs}
\author[T.~Hibi]{Takayuki Hibi}
\address[Takayuki Hibi]{Department of Pure and Applied Mathematics,
	Graduate School of Information Science and Technology,
	Osaka University,
	Suita, Osaka 565-0871, Japan}
\email{hibi@math.sci.osaka-u.ac.jp}
\author[A.~Tsuchiya]{Akiyoshi Tsuchiya}
\address[Akiyoshi Tsuchiya]{Department of Pure and Applied Mathematics,
	Graduate School of Information Science and Technology,
	Osaka University,
	Suita, Osaka 565-0871, Japan}
\email{a-tsuchiya@ist.osaka-u.ac.jp}
\subjclass[2010]{13P10, 52B20}
\keywords{reflexive polytope, integer decomposition property, 
order polytope, stable set polytope, perfect graph, Ehrhart $\delta$-polynomial, 
Gr\"{o}bner basis}
\begin{abstract}
Reflexive polytopes which have the integer decomposition property are of interest. Recently, some large classes of reflexive polytopes with integer decomposition property  coming from the order polytopes and the chain polytopes of finite partially ordered sets are known.
In the present paper, we will generalize this result.
In fact, 
by virtue of the algebraic technique on Gr\"obner bases,  new classes of reflexive polytopes with the integer decomposition property coming from the order polytopes of finite partially ordered sets and the stable set polytopes of perfect graphs will be introduced. 
Furthermore, the result will give a polyhedral characterization of perfect graphs. 
Finally, we will investigate the Ehrhart $\delta$-polynomials of these reflexive polytopes.
\end{abstract}

\maketitle
\section*{Introduction}
Recently, the study on reflexive polytopes with the integer decomposition property has been achieved in the frame of Gr\"obner bases (\cite{twin,HMT1,HMT2,HTomega,HTperfect,harmony}).
First, we recall the definition of a reflexive polytope with the integer decomposition property.

A {\em lattice polytope} (or an {\em integral polytope}) is 
a convex polytope all of whose vertices have integer coordinates.
A lattice polytope $\Pc \subset \RR^d$ of dimension $d$ is 
called {\em reflexive} 
if the origin of $\RR^d$ belongs to the interior of $\Pc$ and 
if the dual polytope 
\[
\Pc^{\vee} = \{ {\bf x} \in \RR^{d} \, : \, \langle {\bf x}, {\bf y} \rangle \le 1,
\, \forall {\bf y} \in \Pc \}
\]
is again a lattice polytope.  Here $\langle {\bf x}, {\bf y} \rangle$
is the canonical inner product of $\RR^d$.
It is known that reflexive polytopes correspond to Gorenstein toric Fano varieties, and they are related to
mirror symmetry (see, e.g., \cite{mirror,Cox}).
The existence of a unique interior lattice point implies that in each dimension there exist only finitely many reflexive polytopes 
up to unimodular equivalence (\cite{Lag}), 
and all of them are known up to dimension $4$ (\cite{Kre}).
Moreover, every lattice polytope is unimodularly equivalent to a face of some reflexive polytope (\cite{refdim}).
On the other hand, 
we say that a lattice polytope $\Pc \subset \RR^d$ has the 
{\em integer decomposition property} if, for each integer $n \geq 1$ 
and for each $\ab \in n\Pc \cap \ZZ^d$,
where $n \Pc$ is the $n$th dilated polytope
$\{ \, n \ab \, : \, \ab \in \Pc \, \}$ of $\Pc$,  
there exist $\ab_1,\ldots,\ab_n$ belonging to $\Pc \cap \ZZ^d$ 
with $\ab=\ab_1+\cdots+\ab_n$.
The integer decomposition property is
particularly important in the theory and application of integer
programing \cite[\S 22.10]{integer}.
Hence to find new classes of reflexive polytopes with the integer decomposition property is an important problem.

Given two lattice polytopes $\Pc \subset \RR^d$ and $\Qc \subset \RR^d$ 
of dimension $d$, 
we introduce the lattice polytopes $\Gamma(\Pc,\Qc) \subset \RR^{d}$ 
and $\Omega(\Pc,\Qc) \subset \RR^{d+1}$ with
\[
\Gamma(\Pc,\Qc)=\textnormal{conv}\{\Pc\cup (-\Qc)\}, 
\]\[
\Omega(\Pc,\Qc)=\textnormal{conv}\{(\Pc\times\{1\}) \cup (-\Qc\times\{-1\})\}.
\]
Here $\Pc\times\{1\} = \{ \, (\ab, 1) \in \RR^{d+1} \, : \, 
\ab \in \Pc \, \}$.
By using these constructions, we can obtain several classes of reflexive polytopes with the integer decomposition property.
In fact, in \cite{twin,HMT1,HMT2,HTomega}, large classes of reflexive polytopes with the integer decomposition property which arise from finite partially ordered sets are given.
Moreover, in \cite{HTperfect,harmony}, large classes of reflexive polytopes with the integer decomposition property which arise from perfect graphs are given.
In particular, we focus on the following result:
\begin{Theorem}[\cite{HMT1,HTomega}]
	\label{thm:oc}
	Let $P$ and $Q$ be finite partially ordered sets on $[d]=\{1,\ldots,d\}$.
	Then each of $\Gamma(\Oc_P,\Cc_Q)$ and
	$\Omega(\Oc_P,\Cc_Q)$ is a reflexive polytope with the integer decomposition property, where $\Oc_P$ is the order polytope of $P$ and $\Cc_Q$ is the chain polytope of $Q$. 
\end{Theorem}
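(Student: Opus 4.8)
The plan is to handle the integer decomposition property and reflexivity through the toric algebra attached to the lattice points, reserving a polar-duality computation for the reflexive part. Throughout I use that $\Oc_P$ and $\Cc_Q$ are lattice polytopes contained in $[0,1]^d$ having the origin as a common vertex, whose vertices are precisely the indicator vectors $\mathbf{1}_F$ of the filters $F$ of $P$, respectively the indicator vectors $\mathbf{1}_A$ of the antichains $A$ of $Q$. So the natural point configuration to work with is $\{\mathbf{1}_F\} \cup \{-\mathbf{1}_A\}$ (with the empty filter and empty antichain both giving $\mathbf{0}$), and in the $\Omega$ case the configuration $\{(\mathbf{1}_F,1)\} \cup \{(-\mathbf{1}_A,-1)\}$. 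I would first record the elementary facts that $\Gamma(\Oc_P,\Cc_Q)$ and $\Omega(\Oc_P,\Cc_Q)$ are full-dimensional and that the origin lies in their interior: for $\Omega$ the origin is the midpoint of the segment joining $(\mathbf{0},1)$ and $(\mathbf{0},-1)$, both of which are vertices since $\mathbf{0}$ is a vertex of $\Oc_P$ and of $\Cc_Q$, and for $\Gamma$ the two summands straddle the coordinate hyperplanes through $\mathbf{0}$.

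For the integer decomposition property I would pass to the toric ring of the above configuration: introduce one variable per point together with a homogenizing variable and consider the toric ideal $I$, the kernel of the associated monomial map. The goal is to exhibit a term order for which $\ini(I)$ is squarefree, since a squarefree initial ideal corresponds to a regular unimodular triangulation, and a lattice polytope admitting a unimodular triangulation is normal and therefore has the integer decomposition property. A squarefree initial ideal also shows that the configuration generates a saturated semigroup, so the lattice points of $\Gamma$ (resp. $\Omega$) are exactly the vectors in the configuration; in particular the origin is the unique interior lattice point. The binomials in the anticipated Gr\"obner basis fall into three families: the exchange relations internal to the $\Oc_P$-side, governed by the distributive lattice of filters of $P$ via meet and join; the relations internal to the $\Cc_Q$-side coming from the antichains of $Q$; and the bridging relations that pair a point of $\Oc_P$ with one of $-\Cc_Q$ across the origin. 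Choosing the term order so that these three families reduce compatibly is the technical heart of this step.

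For reflexivity I would compute the dual by polar duality. Since $\mathbf{0}$ is a common vertex, $\Gamma(\Oc_P,\Cc_Q)^{\vee} = \Oc_P^{\circ} \cap (-\Cc_Q^{\circ})$, so a vertex $c$ of the dual is cut out by a nonsingular system of the shape $\langle \mathbf{1}_{F}, c\rangle = 1$ and $\langle \mathbf{1}_{A}, c\rangle = -1$, with $F$ ranging over certain filters of $P$ and $A$ over certain antichains of $Q$; equivalently, $c$ is the primitive integral-or-not normal of a facet of $\Gamma$ spanned by the corresponding vectors $\mathbf{1}_F$ and $-\mathbf{1}_A$. To conclude that every such $c$ lies in $\ZZ^d$ it suffices that each facet of $\Gamma$ be covered by unimodular simplices, so that its lattice points affinely generate the lattice of its supporting hyperplane and the normal is forced to be integral with right-hand side $1$ relative to the origin; this is exactly what the unimodular triangulation of the previous step supplies, since a regular (pulling) triangulation restricts to the boundary. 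Hence $\Gamma(\Oc_P,\Cc_Q)^{\vee}$ is a lattice polytope and $\Gamma(\Oc_P,\Cc_Q)$ is reflexive; the argument for $\Omega(\Oc_P,\Cc_Q)$ is parallel, with the extra coordinate contributing the two constraints coming from the slices at heights $\pm 1$. An alternative route is to read the Ehrhart $\delta$-polynomial off the squarefree initial ideal, verify that it is palindromic, and invoke that a normal lattice polytope whose unique interior lattice point is the origin is reflexive once its $\delta$-polynomial is symmetric.

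I expect the main obstacle to be the reflexivity, and more precisely the interplay of the two sides at the origin: identifying the bridging facets of $\Gamma$ (resp. $\Omega$) exactly and proving that their normal vectors are integral with right-hand side $1$. Equivalently, the crux is the unimodularity of the combined filter--antichain configuration, which is what simultaneously forces the squarefreeness of the chosen initial ideal and the integrality of the dual vertices. Once the term order is fixed so that the bridging binomials belong to a squarefree Gr\"obner basis, the remaining verifications, namely full-dimensionality, uniqueness of the interior lattice point, and the matching of the two descriptions of the dual, are routine.
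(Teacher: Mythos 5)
Your overall strategy coincides with the paper's: both reflexivity and the integer decomposition property are extracted from a squarefree reverse lexicographic initial ideal of the toric ideal of the point configuration, via the criterion of Lemma~\ref{HMOS} (equivalently, a regular unimodular pulling triangulation with the origin as apex). The paper in fact obtains Theorem~\ref{thm:oc} as the special case of Proposition~\ref{gamma} and Proposition~\ref{omega} in which $G$ is the comparability graph of $Q$, so that $\Cc_Q=\Qc_G$ with $G$ perfect. However, your proposal defers exactly the step that constitutes the entire content of the proof. You write that ``choosing the term order so that these three families reduce compatibly is the technical heart,'' and then do not choose it or verify anything about it. The paper's proof consists precisely of: (a) the explicit reverse lexicographic order with $z<y_S<x_I$, refined by inclusion on both the poset ideals and the antichains, with the variable mapping to the origin \emph{smallest} (this last condition is not optional: an arbitrary squarefree initial ideal gives a unimodular triangulation and hence the IDP, but only a pulling order based at the origin forces every facet to lie at lattice distance $1$, which is what reflexivity requires); (b) the explicit identification of the bridging monomials as $x_Iy_S$ with $\max(I)\cap S\neq\emptyset$; and (c) the verification, via the criterion of Lemma~\ref{OHroots}, that no irreducible binomial of the toric ideal has both of its monomials outside the ideal generated by these monomials --- the key combinatorial step being that a maximal element $i$ of $I_a\setminus I'_{a'}$ must be absorbed by some antichain $S_j$ containing $i$, producing a forbidden product $x_{I_a}y_{S_j}$. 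None of this is present in your write-up, so the argument is a plausible plan rather than a proof.

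Two smaller points. First, your justification that the origin is interior to $\Omega(\Oc_P,\Cc_Q)$ --- that it is the midpoint of $(\mathbf{0},1)$ and $(\mathbf{0},-1)$ --- only shows membership, not interiority; one needs a full-dimensional positive combination, e.g.\ using $(\rho([d]),1)$, $(\mathbf{0},1)$ and the points $(-\eb_i,-1)$. Second, the $\Omega$ case is not merely ``parallel'' to the $\Gamma$ case: in $\Omega$ the origin is not one of the configuration points coming from $P$ or $Q$ but a genuinely new interior lattice point, and the Gr\"obner basis needs the extra generator $x_{\emptyset}y_{\emptyset}$ together with a separate degree count in the $(d+1)$st coordinate to rule out binomials supported on $x_\emptyset$ and $y_\emptyset$; the paper handles this with a dedicated argument in Proposition~\ref{omega} that has no analogue in the $\Gamma$ case.
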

In the present paper,
we will generalize this result.
In fact, we will show the following: 
\begin{Theorem}
	\label{main}
	Let $\Delta$ be a simplicial complex on $[d]$.  
	Then the following conditions are equivalent:
	\begin{itemize}
		\item[(i)] $\Gamma(\Oc_P,\Pc_{\Delta})$ is a reflexive polytope 
		for some finite partially ordered set $P$ on $[d]$; 
		\item[(ii)] $\Gamma(\Oc_P,\Pc_{\Delta})$ is a reflexive polytope 
		for all finite partially ordered set $P$ on $[d]$;
		\item[(iii)] $\Omega(\Oc_P,\Pc_{\Delta})$ has the integer 
		decomposition property for some finite partially ordered set $P$ 
		on $[d]$; 
		\item[(iv)] $\Omega(\Oc_P,\Pc_{\Delta})$ has the integer 
		decomposition property for all finite partially ordered set $P$ 
		on $[d]$; 
		\item[(v)] There exists a perfect graph $G$ on $[d]$ such that $\Pc_{\Delta}=\Qc_G$,
	\end{itemize}
where $\Pc_{\Delta}$ is the lattice polytope which is the convex hull of the indicator vectors of $\Delta$ and $S(G)$ is the stable set polytope of a finite simple graph $G$ on $[d]$.
	In particular,
	each of $\Gamma(\Oc_P,\Qc_{G})$ and $\Omega(\Oc_P,\Qc_{G})$ is a reflexive
	polytope with the integer decomposition property
	for all finite partially ordered set $P$ on $[d]$ and all perfect graph $G$ on $[d]$.
\end{Theorem}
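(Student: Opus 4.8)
The plan is to derive the concluding assertion from the equivalence of (i)--(v), feeding in the two properties that the equivalence does not state explicitly. Fix a perfect graph $G$ on $[d]$ and a finite poset $P$ on $[d]$, and let $\Delta$ be the simplicial complex on $[d]$ whose faces are exactly the stable sets of $G$ (the independence complex of $G$; this is a simplicial complex since every subset of a stable set is stable). A subset of $[d]$ is a face of $\Delta$ precisely when it is a stable set of $G$, so $\Pc_{\Delta}$ and $\Qc_G$ are the convex hulls of the same set of $0/1$ vectors and hence $\Pc_{\Delta}=\Qc_G$; as $G$ is perfect, condition (v) holds for $\Delta$. The implication (v)$\Rightarrow$(ii) then gives that $\Gamma(\Oc_P,\Qc_G)=\Gamma(\Oc_P,\Pc_{\Delta})$ is reflexive, and (v)$\Rightarrow$(iv) gives that $\Omega(\Oc_P,\Qc_G)=\Omega(\Oc_P,\Pc_{\Delta})$ has the integer decomposition property, in both cases for every $P$.

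It remains to supply the two missing properties: reflexivity of $\Omega(\Oc_P,\Qc_G)$ and the integer decomposition property of $\Gamma(\Oc_P,\Qc_G)$. For the first I would prove the general transfer statement that, for lattice polytopes $\Pc,\Qc\subset\RR^d$ of dimension $d$ each containing the origin, reflexivity of $\Gamma(\Pc,\Qc)$ forces reflexivity of $\Omega(\Pc,\Qc)$. The two horizontal facets $\Pc\times\{1\}$ and $-\Qc\times\{-1\}$ lie on the lattice hyperplanes $x_{d+1}=\pm1$. Every remaining facet is supported by a hyperplane $\langle(a,b),\cdot\rangle=1$ whose spatial part satisfies $u-m=2$, where $u=\max_{p\in\Pc}\langle a,p\rangle\ge0$ and $m=\min_{q\in\Qc}\langle a,q\rangle\le0$ (the bounds on $u$ and $-m$ because the origin lies in $\Pc$ and in $\Qc$), and whose height part is $b=1-u$. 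Reflexivity of $\Gamma(\Pc,\Qc)$ forces the normalized normal $a/\max(u,-m)$ of the corresponding facet of $\Gamma(\Pc,\Qc)$ to be a primitive integer vector; a short computation using $u-m=2$ and $u,-m\ge0$ then leaves only $(u,-m)\in\{(1,1),(2,0),(0,2)\}$, in each of which $a$ and $b=1-u$ are integral. Applying this with $\Pc=\Oc_P$, $\Qc=\Qc_G$ and the reflexivity of $\Gamma(\Oc_P,\Qc_G)$ already obtained yields that $\Omega(\Oc_P,\Qc_G)$ is reflexive.

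For the integer decomposition property of $\Gamma(\Oc_P,\Qc_G)$ I would exploit that it is the image of $\Omega(\Oc_P,\Qc_G)$ under the lattice projection $\pi\colon\RR^{d+1}\to\RR^d$ forgetting the last coordinate, i.e. $\Gamma(\Oc_P,\Qc_G)=\pi(\Omega(\Oc_P,\Qc_G))$. The facet analysis above shows that every facet of $\Omega(\Oc_P,\Qc_G)$ has last coordinate $b\in\{-1,0,1\}$; consequently, for each $n\ge1$ and each $y\in n\Gamma(\Oc_P,\Qc_G)\cap\ZZ^d$, the fiber $\pi^{-1}(y)\cap n\Omega(\Oc_P,\Qc_G)$ is a segment whose endpoints, being cut out over the lattice point $y$ by non-vertical facets ($b=\pm1$), are integers, so the fiber contains a lattice point $(y,m)$ of $n\Omega(\Oc_P,\Qc_G)$. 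Decomposing $(y,m)$ via the integer decomposition property of $\Omega(\Oc_P,\Qc_G)$ and projecting the summands by $\pi$ produces a decomposition of $y$ into lattice points of $\Gamma(\Oc_P,\Qc_G)$, so $\Gamma(\Oc_P,\Qc_G)$ has the integer decomposition property. This completes the proof.

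The deep content is carried by the equivalence (i)--(v), and in particular by (v)$\Rightarrow$(iv): the proof that $\Omega(\Oc_P,\Qc_G)$ has the integer decomposition property for perfect $G$ is where the Lov\'asz--Fulkerson--Chv\'atal description of $\Qc_G$ by clique inequalities (equivalently, the compressedness of $\Qc_G$) enters, through a squarefree initial ideal of the associated toric ideal. Granting that, the main obstacle in the present derivation is the integer-decomposition transfer of the third paragraph: the point requiring genuine care is verifying that the fibers of $\pi$ over lattice points actually contain lattice points, which is exactly what the reflexivity of $\Omega(\Oc_P,\Qc_G)$---via the restriction $b\in\{-1,0,1\}$ on facet normals---is used to guarantee.
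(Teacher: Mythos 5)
Your proposal does not prove the theorem: the substance of Theorem \ref{main} is the five-way equivalence (i)--(v), and you explicitly take that equivalence as given (``Granting that\dots'') and only derive the concluding ``in particular'' sentence from it. None of the implications that carry the content --- (v) $\Rightarrow$ (ii), (v) $\Rightarrow$ (iv), (i) $\Rightarrow$ (v), (iii) $\Rightarrow$ (v) --- is argued. The paper devotes Sections 2 and 3 to exactly these: Proposition \ref{gamma} proves (i) $\Leftrightarrow$ (ii) $\Leftrightarrow$ (v) by exhibiting a squarefree reverse lexicographic initial ideal of $I_{\Gamma(\Oc_P,\Qc_G)}$ (using compressedness of $\Oc_P$ and $\Qc_G$, i.e.\ Lemma \ref{comp}) for the positive direction, and by producing non-integral facet normals from a non-flag minimal nonface, an odd hole, or an odd antihole (via Lemmata \ref{flag}, \ref{strong}, \ref{facet}) for the negative direction; Proposition \ref{omega} does the analogous work for $\Omega$, where the negative direction instead exhibits explicit lattice points of $n\Omega(\Oc_P,\Pc_\Delta)$ that do not decompose. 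You name the right ingredients (compressedness, squarefree initial ideals) but supply no argument, so the proof is missing its core.

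Even the part you do address is handled differently, and less safely, than in the paper. The paper never transfers reflexivity from $\Gamma$ to $\Omega$ or the integer decomposition property from $\Omega$ to $\Gamma$: Lemma \ref{HMOS} delivers reflexivity \emph{and} the integer decomposition property simultaneously for each of $\Gamma(\Oc_P,\Qc_G)$ and $\Omega(\Oc_P,\Qc_G)$ from the respective Gr\"obner basis computations, so both halves of the ``in particular'' statement come for free once (v) $\Rightarrow$ (ii) and (v) $\Rightarrow$ (iv) are established. Your transfer argument has a concrete gap: you assert that for a side facet of $\Omega(\Pc,\Qc)$ with normal $(a,b)$, the rescaled vector $a/\max(u,-m)$ is a vertex of $\Gamma(\Pc,\Qc)^\vee$, but the face of $\Gamma(\Pc,\Qc)$ supported in the direction $a$ need only be the projection of that side facet and may fail to be a facet of $\Gamma(\Pc,\Qc)$; in that case Lemma \ref{facet} gives no integrality constraint on $a/\max(u,-m)$, and the case analysis $(u,-m)\in\{(1,1),(2,0),(0,2)\}$ (which also tacitly assumes $u$ and $m$ are integers) collapses. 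The subsequent fiber argument for the integer decomposition property of $\Gamma$ depends on this unproven facet description of $\Omega$, so it inherits the gap.
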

It is known that every chain polytope is a stable set polytope of a perfect graph.
Hence this theorem is a generalization of Theorem \ref{thm:oc}.
On the other hand, we can regard this theorem as a polyhedral characterization of perfect graphs. For example, this theorem implies that a finite simple graph $G$ on $[d]$ is perfect if and only if for some (or all)  finite partially ordered set  $P$ on $[d]$, $\Gamma(\Oc_P,\Qc_{G})$ is a reflexive polytope.
To find a new characterization of perfect graphs is also of interest.

\smallskip
We now turn to the discussion of Ehrhart $\delta$-polynomials of
$\Gamma(\Oc_{P},\Qc_{G})$ and $\Omega(\Oc_{P},\Qc_{G})$ for a finite partially ordered set $P$ on $[d]$ and a perfect graph $G$ on $[d]$.
Let, in general, $\Pc \subset \RR^d$ be a lattice polytope of dimension $d$.
The {\em Ehrhart $\delta$-polynomial} of $\Pc$ is the polynomial
\[
\delta(\Pc, \lambda) = 
(1 - \lambda)^{d+1} \left[ \,
1 + \sum_{n=1}^{\infty} \mid n\Pc \cap \ZZ^d \mid \, \lambda^n \, \right]
\] 
in $\lambda$.
Then the degree of $\delta(\Pc, \lambda)$ is at most $d$. Moreover, each coefficient of $\delta(\Pc, \lambda)$ is a nonnegative integer (\cite{RS_OHGCM}).
In addition $\delta(\Pc, 1)$ coincides with the \textit{normalized volume} of $\Pc$.
Refer the reader to \cite{BR15} and \cite[Part II]{HibiRedBook} for the detailed information 
about Ehrhart $\delta$-polynomials. 

Now, we recall the following result:
\begin{Theorem}[\cite{HMT2,HTomega}]
	Let $P$ and $Q$ be finite partially ordered sets on $[d]$.
	Then we have
		$$\delta(\Gamma(\Oc_P,\Cc_Q),\lambda)=\delta(\Gamma(\Cc_P,\Cc_Q),\lambda),$$
	$$\delta(\Omega(\Oc_P,\Cc_Q),\lambda)=\delta(\Omega(\Cc_P,\Cc_Q),\lambda).$$
\end{Theorem}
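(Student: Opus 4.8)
The plan is to deduce both identities from Stanley's transfer map between the order polytope and the chain polytope of $P$, applied only to the ``$P$-block'' so that the $-\Cc_Q$-block is left fixed. Since the Ehrhart $\delta$-polynomial of a lattice polytope is determined by its counting function $n\mapsto |n\Pc\cap\ZZ^d|$, it suffices to exhibit, for every integer $n\ge 1$, a bijection between $n\Gamma(\Cc_P,\Cc_Q)\cap\ZZ^d$ and $n\Gamma(\Oc_P,\Cc_Q)\cap\ZZ^d$, and the analogous bijection for $\Omega$. Recall that Stanley's transfer map $\tau_P\colon\Oc_P\to\Cc_P$, given by $\tau_P(x)_i=x_i-\max(\{0\}\cup\{x_j:j\lessdot_P i\})$, is a piecewise-linear homeomorphism, unimodular on each linear piece, which restricts for every $m\ge 1$ to a bijection $m\Oc_P\cap\ZZ^d\to m\Cc_P\cap\ZZ^d$; in particular $\delta(\Oc_P,\lambda)=\delta(\Cc_P,\lambda)$.

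First I would introduce the \emph{one-sided inverse transfer map} $\Psi\colon\RR^d\to\RR^d$ defined by the recursion, running from the minimal to the maximal elements of $P$, $\Psi(y)_i=y_i+\max(\{0\}\cup\{\Psi(y)_j:j\lessdot_P i\})$. On the nonnegative orthant $\Psi$ coincides with $\tau_P^{-1}\colon\Cc_P\to\Oc_P$, while on the nonpositive orthant, which contains $-\Cc_Q$, the inserted $0$ forces $\Psi$ to be the identity. Globally $\Psi$ is a positively homogeneous, piecewise-linear homeomorphism of $\RR^d$ that is unipotent lower-triangular, hence unimodular, with respect to a linear extension of $P$ on each linear piece; consequently it maps $\ZZ^d$ bijectively onto itself and satisfies $\Psi(n\mathcal{S})=n\,\Psi(\mathcal{S})$ for every $n\ge 1$ and every $\mathcal{S}\subseteq\RR^d$.

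The heart of the argument is to verify that $\Psi$ carries $\Gamma(\Cc_P,\Cc_Q)$ onto $\Gamma(\Oc_P,\Cc_Q)$. By positive homogeneity this gives $\Psi(n\Gamma(\Cc_P,\Cc_Q))=n\Gamma(\Oc_P,\Cc_Q)$ for all $n$, and, $\Psi$ being a lattice-preserving unimodular homeomorphism, the desired lattice-point bijection follows, proving the first identity. The restrictions $\Psi(-\Cc_Q)=-\Cc_Q$ and $\Psi(\Cc_P)=\Oc_P$ are immediate (the latter being Stanley's theorem); the subtle point is the mixed region, where coordinates of both signs occur. Since a piecewise-linear map need not send a convex hull to a convex hull, one cannot simply invoke $\Gamma=\conv(\Cc_P\cup(-\Cc_Q))$. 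Instead I would argue at the level of the facet-defining inequalities of $\Gamma(\Oc_P,\Cc_Q)$: each facet lies over the $P$-block, over the $Q$-block, or is a coupling facet involving coordinates of both, and one checks that pulling these inequalities back along $\Psi$ reproduces exactly the facet inequalities of $\Gamma(\Cc_P,\Cc_Q)$. Establishing compatibility of the coupling facets with the shear built into $\Psi$ is the main obstacle, and it is precisely here that the explicit description of $\Gamma$ underlying Theorem \ref{thm:oc} is needed.

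For the $\Omega$-identity I would run the same construction on the first $d$ coordinates via $\Psi\times\id$, leaving the height coordinate fixed: a lattice point of $n\Omega(\Cc_P,\Cc_Q)$ of height $k$ has the form $(a-b,k)$ with $a$ in a dilate of $\Cc_P$ and $b$ in a dilate of $\Cc_Q$, so the comparison reduces height-by-height to the $\Gamma$-case together with Stanley's bijection on the $\Cc_P$-part. Alternatively, and perhaps more robustly, one can sidestep the convexity issue by working through the regular unimodular triangulations furnished by the Gr\"obner bases of \cite{HMT2,HTomega}: the $\delta$-polynomial equals the $h$-polynomial of the initial (Stanley--Reisner) complex, and one shows that the initial complexes of the $\Oc_P$- and $\Cc_P$-versions share the same $h$-vector, since they differ only in the $P$-block, where the transfer correspondence preserves the relevant face numbers, while the $Q$-block and the coupling monomials are untouched. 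In either approach the crux is identical: controlling the interaction between the $P$-block and the $Q$-block, which is exactly what the convex-hull construction entangles.
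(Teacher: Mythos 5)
Your primary route has a genuine gap at exactly the point you yourself flag as its heart: the claim that the extended inverse transfer map $\Psi$ carries $\Gamma(\Cc_P,\Cc_Q)$ onto $\Gamma(\Oc_P,\Cc_Q)$. This is not a deferred verification --- it is the entire content of the theorem --- and the method you propose for it does not work as stated. Since $\Psi$ is only piecewise linear, the pullback of a facet inequality $\langle \ab,\zb\rangle\le 1$ of $\Gamma(\Oc_P,\Cc_Q)$ along $\Psi$ is a piecewise-linear condition, not a linear one, so it cannot literally ``reproduce a facet inequality'' of $\Gamma(\Cc_P,\Cc_Q)$. Already for $P$ the chain $1<2$ and $Q$ a two-element antichain, the facet $-z_1+2z_2\le 1$ of $\Gamma(\Oc_P,\Cc_Q)$ pulls back to $-z_1+z_2\le 1$ on the region $z_2\ge 0$ but to $-z_1+2z_2\le 1$ on $z_2\le 0$; the first piece is a facet of the source, the second is not, and one must argue separately that it is implied by the source system on that region. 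A correct version of the argument needs the full facet description of both polytopes (in particular the mixed facets coupling an antichain of $P$ with an antichain of $Q$) and a case analysis over the linear regions of $\Psi$; none of this is supplied, and it is not obvious it closes. (Also, with the paper's convention that $\Oc_P$ is spanned by indicator vectors of poset \emph{ideals}, the transfer map must use upper covers, $x_i-\max(\{0\}\cup\{x_j: i\lessdot_P j\})$, not lower ones.) Your height-by-height reduction for $\Omega$ has the same status: the slice of $n\Omega$ at height $k$ is the Minkowski sum $\frac{n+k}{2}\Cc_P+\left(-\frac{n-k}{2}\Cc_Q\right)$ with possibly non-integral dilation factors, so it does not reduce to the $\Gamma$-case or to Stanley's bijection without further work.

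Your fallback --- comparing initial complexes coming from the Gr\"obner bases --- is essentially what the paper does (for the more general statement with $\Cc_Q$ replaced by the stable set polytope of a perfect graph), but your one-sentence justification skips the decisive point. The paper shows that the reverse lexicographic initial ideal of $I_{\Gamma(\Oc_P,\Cc_Q)}$ is generated by $\Mc_{\Oc_P}\cup\Mc_{\Cc_Q}\cup\{x_Iy_S:\max(I)\cap S\neq\emptyset\}$ and that of $I_{\Gamma(\Cc_P,\Cc_Q)}$ by $\Mc_{\Cc_P}\cup\Mc_{\Cc_Q}\cup\{x_{\max(I)}y_S:\max(I)\cap S\neq\emptyset\}$; the substitution $x_I\mapsto x_{\max(I)}$, a bijection from nonempty poset ideals to nonempty antichains, is then an isomorphism of the two Stanley--Reisner quotients precisely \emph{because the coupling generators depend on $I$ only through $\max(I)$}. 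It is this observation --- not that the coupling monomials are ``untouched,'' which they are not --- that makes the Hilbert functions agree; one then invokes Lemma \ref{lem:hilbert} together with the integer decomposition property of both polytopes to convert equality of Hilbert functions into equality of Ehrhart $\delta$-polynomials. If you wish to salvage the transfer-map route, you would in effect be reproving this combinatorial isomorphism geometrically, which is strictly harder than the algebraic argument.
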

We will also  generalize this theorem.
In fact, we will show the following:
\begin{Theorem}
	\label{deltath}
	Let $P$ be a finite partially ordered set  on $[d]$ and $G$ a perfect graph on $[d]$.
	Then we have
	$$\delta(\Gamma(\Oc_P,\Qc_G),\lambda)=\delta(\Gamma(\Cc_P,\Qc_G),\lambda),$$
	$$\delta(\Omega(\Oc_P,\Qc_G),\lambda)=\delta(\Omega(\Cc_P,\Qc_G),\lambda),$$
	$$\delta(\Omega(\Oc_P,\Qc_G),\lambda)=(1+\lambda) \cdot \delta(\Gamma(\Oc_P,\Qc_G),\lambda).$$
\end{Theorem}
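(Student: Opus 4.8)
The plan is to pass from the polytopes to their Ehrhart series and to prove each identity by comparing lattice-point counts in every dilation. Write $E_{\Pc}(\lambda)=\sum_{n\geq 0}|n\Pc\cap\ZZ^{\dim\Pc}|\,\lambda^n=\delta(\Pc,\lambda)/(1-\lambda)^{\dim\Pc+1}$. Since $\Gamma$ has dimension $d$ and $\Omega$ has dimension $d+1$, the three asserted equalities are equivalent, respectively, to $E_{\Gamma(\Oc_P,\Qc_G)}=E_{\Gamma(\Cc_P,\Qc_G)}$, to $E_{\Omega(\Oc_P,\Qc_G)}=E_{\Omega(\Cc_P,\Qc_G)}$, and to $E_{\Omega(\Oc_P,\Qc_G)}=\frac{1+\lambda}{1-\lambda}E_{\Gamma(\Oc_P,\Qc_G)}$. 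Throughout I use that $\Oc_P,\Cc_P,\Qc_G\subseteq[0,1]^d\subseteq\RR^d_{\geq 0}$ all contain the origin as a vertex, and that by Theorem \ref{main} all four polytopes are reflexive with the integer decomposition property, so that $\delta$ equals the $h^\ast$-polynomial and, via the Gr\"obner bases produced in the proof of Theorem \ref{main}, each admits a regular unimodular triangulation.

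My first move is a reduction: I would prove the third identity in the stronger form
\[
\delta(\Omega(\Pc,\Qc),\lambda)=(1+\lambda)\,\delta(\Gamma(\Pc,\Qc),\lambda)
\]
for an arbitrary pair $\Pc,\Qc\subseteq\RR^d_{\geq 0}$ of lattice polytopes containing $0$ for which $\Gamma(\Pc,\Qc)$ and $\Omega(\Pc,\Qc)$ are reflexive with the integer decomposition property (in particular for both $(\Oc_P,\Qc_G)$ and $(\Cc_P,\Qc_G)$). Granting this, the first two identities become equivalent, since $(1+\lambda)\delta(\Gamma(\Oc_P,\Qc_G))=\delta(\Omega(\Oc_P,\Qc_G))$ and $(1+\lambda)\delta(\Gamma(\Cc_P,\Qc_G))=\delta(\Omega(\Cc_P,\Qc_G))$ and $1+\lambda\neq 0$; thus it suffices to prove just one of them, say the $\Gamma$-identity.

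For the $(1+\lambda)$-identity I would slice $\Omega$ by the last coordinate. A direct computation gives $(z,m)\in n\Omega(\Pc,\Qc)$ if and only if $z\in s\Pc+(-t)\Qc$ (Minkowski sum) with $s=(n+m)/2$ and $t=(n-m)/2\geq 0$; summing over $z$ at total level $s+t=n$ recovers $n\Gamma(\Pc,\Qc)=\bigcup_{s+t=n}\bigl(s\Pc+(-t)\Qc\bigr)$. Hence the projection forgetting $m$ sends $n\Omega\cap\ZZ^{d+1}$ onto $n\Gamma\cap\ZZ^d$, and the key is to compute the length of each vertical fibre. For $z\in\ZZ^d$ put $c(z)=\min\{k\geq 0:z\in k\Gamma\}$. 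The crucial Lemma I would establish is that the fibre over $z$ has exactly $2\bigl(n-c(z)\bigr)+1$ lattice points whenever $c(z)\leq n$. The lower bound is easy: because $0$ is a vertex of both $\Pc$ and $\Qc$, from any representation $z=sp-tq$ at level $n$ one may replace $p$ by $\tfrac{s}{s+1}p$ or $q$ by $\tfrac{t}{t+1}q$, which enlarges the feasible interval of splittings by one at each step. The main obstacle is the matching upper bound, that the feasible interval of $s$ has length exactly $n-c(z)$ and no more; here I would use reflexivity of $\Gamma$, writing the boundary point $z/c(z)$ on a supporting facet $\langle a,\cdot\rangle=1$ with $a$ a lattice point and arguing that this facet rigidifies the splitting at the minimal level. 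Summing $2(n-c(z))+1$ over all $z$ and comparing with $E_\Gamma$ then yields exactly $E_\Omega=\frac{1+\lambda}{1-\lambda}E_\Gamma$.

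It remains to prove one $\Gamma$-identity $E_{\Gamma(\Oc_P,\Qc_G)}=E_{\Gamma(\Cc_P,\Qc_G)}$. The naive approach, transporting lattice points by Stanley's transfer map on the $P$-coordinates, fails, because in $\Gamma$ the order- and stable-set contributions can cancel in a shared coordinate (already for $P:1<2$ and $G$ a single edge the point $(0,1)$ forces such cancellation), so there is no coordinatewise transfer. Instead I would work at the level of the Ehrhart (toric) algebras, whose Hilbert series are $E_{\Gamma(\Oc_P,\Qc_G)}$ and $E_{\Gamma(\Cc_P,\Qc_G)}$. Using the Gr\"obner bases from the proof of Theorem \ref{main}, I would choose a block monomial order that first compares the generators attached to the stable sets of $G$ and then orders the order-polytope generators by the reverse-lexicographic rule realizing Stanley's order-to-chain degeneration. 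Under such an order I expect the two toric ideals to have the same initial ideal, equivalently the two regular unimodular triangulations to be combinatorially identical, so that the Hilbert series, and hence the $\delta$-polynomials, coincide. The main obstacle here is to show that the mixed $S$-polynomials coupling a $P$-relation with a $G$-relation reduce to $0$; this is precisely where the perfectness of $G$, hence the clique-facet structure of $\Qc_G$, and the independence of the $G$-block from the $P$-block in the chosen order must be exploited. Finally, the second identity follows from this one together with the $(1+\lambda)$-identity, completing the proof.
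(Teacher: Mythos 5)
Your overall architecture is sound, and for the two ``$\Oc_P$ versus $\Cc_P$'' identities you are aiming at exactly the mechanism the paper uses: compare initial ideals of the two toric ideals and invoke the fact that passing to an initial ideal preserves the Hilbert function, then use the integer decomposition property to identify $\delta$ with the $h$-polynomial of the toric ring. But the step you defer --- ``I expect the two toric ideals to have the same initial ideal; the main obstacle is to show that the mixed $S$-polynomials \dots reduce to $0$'' --- is the entire content of the proof, and as set up it is also slightly misframed: the two toric ideals live in different polynomial rings (one with variables indexed by poset ideals, one by antichains), so there is no single monomial order under which they literally share an initial ideal. What the paper actually does is compute, by the same Gr\"obner-basis argument as in Proposition \ref{gamma} (Lemma \ref{OHroots}, not Buchberger's criterion, so no $S$-polynomial computations are needed), the initial ideal of $I_{\Gamma(\Cc_P,\Qc_G)}$ with generators $\Mc_{\Cc_P}\cup\Mc_{\Qc_G}\cup\{x_{\max(I)}y_S:\max(I)\cap S\neq\emptyset\}$, and then observe that the bijection $I\mapsto\max(I)$ from nonempty poset ideals to nonempty antichains induces a graded isomorphism $K[\Oc\Qc]/(\Mc_{\Oc\Qc})\cong K[\Cc\Qc]/(\Mc_{\Cc\Qc})$, because the incomparability relations in (1) and (3) and the mixed relations match verbatim under the renaming. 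Until you verify that both initial ideals have these explicit generators, you have a plan, not a proof.

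For the third identity your route genuinely diverges from the paper, and this is where the more serious gap lies. The paper does not reprove the $(1+\lambda)$ relation: it notes that $\Cc_P=\Qc_{G_P}$ with $G_P$ the (perfect) comparability graph and quotes Lemma \ref{delta} for the pair $(\Qc_{G_P},\Qc_G)$, then transports the identity back through the first two equalities. You instead claim the stronger statement that $\delta(\Omega(\Pc,\Qc),\lambda)=(1+\lambda)\delta(\Gamma(\Pc,\Qc),\lambda)$ for \emph{any} pair with $\Gamma,\Omega$ reflexive and IDP, via the fibre count $2(n-c(z))+1$. The lower bound argument (rescaling $p$ by $\tfrac{s}{s+1}$) is fine, but the upper bound is not a consequence of reflexivity in any obvious way: the feasible region $F=\{(s,t)\in\RR_{\ge0}^2: z\in s\Pc-t\Qc\}$ is a convex, coordinatewise upward-closed polyhedron, and the length of its slice $\{s+t=n\}$ exceeds $n-c(z)$ whenever the lower boundary of $F$ has a slope strictly between $-1$ and $0$ past the face minimizing $s+t$; ruling this out requires control of the facet normals of $\Omega$ (in the perfect-graph case, the clique and chain inequalities), which is precisely the content of the cited result and which ``this facet rigidifies the splitting'' does not supply. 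Either prove the fibre lemma using the explicit facet description available here, or simply cite Lemma \ref{delta} for $(\Cc_P,\Qc_G)$ as the paper does; as written, the third identity is unproven and your reduction of the second identity to the first therefore also has no foundation.
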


In the present paper, by using the algebraic technique on Gr\"obner bases, we will prove Theorems \ref{main} and \ref{deltath}.
Section $1$ will provide basic materials on order polytopes, chain polytopes, stable set polytopes and the toric ideals of lattice polytopes.  In addition, indispensable lemmata 
for our proof of Theorems \ref{main} and \ref{deltath} will be also reported in Section $1$. 
A proof of Theorem \ref{main} will be given in Sections $2$ and $3$.
Especially, in Section $2$, we will prove the equivalence 
(i) $\Leftrightarrow$ (ii) $\Leftrightarrow$ (v) of Theorem \ref{main} (Proposition \ref{gamma}),  and in Section $3$,
we will prove the  equivalence (iii) $\Leftrightarrow$ (iv) $\Leftrightarrow$ (v) of Theorem \ref{main}.
Note that these proofs are very similar but they are not same. 
Finally, in Section $4$, we will prove Theorem \ref{deltath}.

\begin{acknowledgement}{\rm
		The authors would like to thank anonymous referees for reading the manuscript carefully.
		The second author is partially supported by Grant-in-Aid for JSPS Fellows 16J01549.
	}
\end{acknowledgement}

\section{Preliminaries}
In this section, 
we summarize basic materials on order polytopes, chain polytopes, stable set polytopes and the toric ideals of lattice polytopes. 
First, we recall two lattice polytopes arising from a finite partially ordered set.
Let $P $ be a finite partially ordered set on  $[d]$.
A {\em poset ideal} of $P$ is a subset $I \subset [d]$
such that if $i \in I$ and $j \leq i$ in $P$, then $j \in I$.  
In particular, the empty set as well as $[d]$ is a poset ideal
of $P$.  Let $\Ic(P)$ denote the set of poset ideals of $P$.
An {\em antichain} of $P$ is a subset $A \subset [d]$
such that for all $i$ and $j$ belonging to $A$ with $i \neq j$ are incomparable in $P$.  
In particular, the empty set $\emptyset$ and each 1-element subset $\{j\}$ are antichains of $P$.
Let $\Ac(P)$ denote the set of antichains of $P$.
Given a subset $X \subset [d]$,
we may associate $\rho(X) = \sum_{j \in X} {\eb}_j \in {\RR}^d$,
where ${\bf e}_1, \ldots, \eb_{d}$ are
the standard coordinate unit vectors of ${\RR}^d$.
In particular $\rho(\emptyset)$ is the origin ${\bf 0}$ of $\RR^d$.
Stanley \cite{Stanley}
introduced two classes of lattice polytopes arising from finite partially ordered sets, 
which are called order polytopes and chain polytopes.
The \textit{order polytope} $\Oc_P$ of $P$
is defined to be the lattice polytope which is the convex hull of 
$\{\rho(I) : I \in \Jc(P) \}.$
The \textit{chain polytope} $\Cc_P$ of $P$ is defined to be the lattice polytope which is the convex hull of $\{\rho(A) : A \in \Jc(P) \}.$
It then follows that
the order polytope and the chain polytope 
are $(0,1)$-polytopes of dimension $d$.

Second, we recall a lattice polytopes arising from a finite simple graph.
Let $G$ be a finite simple graph on  $[d]$ and $E(G)$ the set of edges of $G$.
(A finite graph $G$ is called simple if $G$ has no loop and no multiple edge.)
A subset $S \subset [d]$ is called {\em stable}  
if, for all $i$ and $j$ belonging to $S$ with $i \neq j$,
one has $\{i,j\} \notin E(G)$.
Note that a stable set of $G$ is also called an {\em independent set} of $G$.
A {\em clique} of $G$ is a subset $C \subset [d]$ such that
for all $i$ and $j$ belonging to $C$ with $i \neq j$,
one has $\{i,j\} \in E(G)$.
Let us note that a clique of $G$ is 
a stable set of the complementary graph $\overline{G}$ of $G$.
The {\em chromatic number} of $G$ is the smallest integer 
$t \geq 1$ for which
there exist stable sets $S_{1}, \ldots, S_{t}$ of $G$ with
$[d] = S_{1} \cup \cdots \cup S_{t}$.
A finite simple graph $G$ is said to be {\em perfect} 
(\cite{sptheorem}) if, 
for any induced subgraph $H$ of $G$
including $G$ itself,
the chromatic number of $H$ is
equal to the maximal cardinality of cliques of $H$.
Recently, there exists two well-known combinatorial characterizations of perfect graphs.
An \textit{odd hole} is an induced odd cycle of length $\geq 5$ and an \textit{odd antihole} is the complementary graph of an odd hole.
\begin{Lemma}[{\cite[1.1]{sptheorem}}]
	\label{weak}
The complementary graph of a perfect graph is perfect.
\end{Lemma}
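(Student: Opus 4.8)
The statement is the Weak Perfect Graph Theorem of Lovász, and the plan is to derive it from a reformulation of perfection that is manifestly invariant under complementation. For an induced subgraph $H$ of a finite simple graph, write $\alpha(H)$, $\omega(H)$ and $\chi(H)$ for the maximum cardinality of a stable set, the maximum cardinality of a clique, and the chromatic number of $H$, respectively. Since the cliques of $H$ are exactly the stable sets of $\overline{H}$ and vice versa, one has $\alpha(\overline{H})=\omega(H)$, $\omega(\overline{H})=\alpha(H)$ and $|V(\overline{H})|=|V(H)|$. First I would introduce the property
\[
(\ast)\qquad |V(H)|\le \alpha(H)\,\omega(H) \ \text{ for every induced subgraph } H \text{ of } G,
\]
and note that it is self-complementary: because $H\mapsto \overline{H}$ is a bijection from the induced subgraphs of $G$ onto those of $\overline{G}$ that preserves $|V(\cdot)|$ and interchanges $\alpha$ and $\omega$, the graph $G$ satisfies $(\ast)$ if and only if $\overline{G}$ does.

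The crux is then the equivalence that $G$ is perfect if and only if $G$ satisfies $(\ast)$. Granting it, the lemma is immediate: if $G$ is perfect then $G$ satisfies $(\ast)$, hence so does $\overline{G}$ by self-complementarity, hence $\overline{G}$ is perfect. The forward implication of the equivalence is easy: for an induced subgraph $H$ of a perfect graph, $H$ is perfect, so a proper colouring realising $\chi(H)=\omega(H)$ partitions $V(H)$ into $\omega(H)$ stable sets, each of size at most $\alpha(H)$, whence $|V(H)|\le \omega(H)\,\alpha(H)$. For the converse I would argue contrapositively: if $G$ is imperfect it contains a \emph{minimally imperfect} induced subgraph $H$ (imperfect, but with every proper induced subgraph perfect), and it suffices to show that such an $H$ violates $(\ast)$, i.e. that $|V(H)|\ge \alpha\omega+1$, where $n=|V(H)|$, $\alpha=\alpha(H)$, $\omega=\omega(H)$.

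To produce this bound I would use a linear-algebra argument in the spirit of Gasparian. The minimality of $H$ first yields two structural facts, both proved the same way: if deleting a vertex $v$ lowered the clique number then $\chi(H)\le\chi(H-v)+1=\omega(H-v)+1\le\omega$, contradicting imperfection, so $\omega(H-v)=\omega$ for all $v$; the same reasoning gives $\omega(H-S)=\omega$ for every nonempty stable set $S$. Moreover each $H-v$ is perfect, so $\chi(H-v)=\omega$ and an optimal colouring partitions $V\setminus\{v\}$ into $\omega$ stable sets. Fixing a maximum stable set $A_0$ and adjoining, for each of its $\alpha$ vertices $v$, the $\omega$ colour classes of such a colouring of $H-v$, I obtain $N:=\alpha\omega+1$ stable sets $A_0,\dots,A_{\alpha\omega}$; and for each $i$, since $\omega(H-A_i)=\omega$, I may pick an $\omega$-clique $C_i\subseteq V\setminus A_i$. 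The goal is a system with $A_i\cap C_i=\emptyset$ and $|A_i\cap C_j|=1$ for $i\neq j$: then the $N\times n$ incidence matrices $M=(\mathbf{1}_{A_i})_i$ and $M'=(\mathbf{1}_{C_j})_j$ satisfy $M(M')^{\mathrm{T}}=J_N-I_N$, which is nonsingular, so $N=\operatorname{rank}(J_N-I_N)\le \operatorname{rank}(M)\le n$, and hence $n\ge \alpha\omega+1$, as desired.

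The hard part will be exactly the construction of this orthogonal system, that is, arranging the cliques $C_i$ compatibly with the chosen colourings so that every off-diagonal intersection $|A_i\cap C_j|$ equals $1$ (the delicate cases being those involving $A_0$ and those in which a clique meets the deleted vertex of a colouring). Once the intersection pattern $J_N-I_N$ is in place, the remaining steps — the rank inequality, the two reductions, and the self-complementarity of $(\ast)$ — are purely formal, so the entire weight of the proof rests on this combinatorial orthogonality together with the elementary structural facts about minimally imperfect graphs recorded above.
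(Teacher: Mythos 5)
The paper offers no proof of this lemma; it is quoted verbatim from \cite{sptheorem} (ultimately Lov\'asz's 1972 theorem), so there is no internal argument to compare against. Your outline is the standard Gasparian-style proof via the characterization that $G$ is perfect if and only if $|V(H)|\le\alpha(H)\,\omega(H)$ for every induced subgraph $H$, and the parts you do carry out are correct: the self-complementarity of $(\ast)$, the easy direction, the reduction to a minimally imperfect $H$, the facts $\omega(H-S)=\omega$ for every stable set $S$ and $\chi(H-v)=\omega(H-v)=\omega$, and the rank computation from $M(M')^{\mathrm{T}}=J_N-I_N$.

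The gap is exactly where you place it: you never establish $|A_i\cap C_j|=1$ for $i\neq j$, and without that the matrix identity, hence the bound $n\ge\alpha\omega+1$, hence the entire converse direction, is unproven. You also slightly misdiagnose its nature: no ``compatible arrangement'' of the cliques is needed, since the intersection pattern holds for \emph{any} choice of $\omega$-cliques $C_i\subseteq V\setminus A_i$. The verification rests on one observation: if $K$ is an $\omega$-clique of $H$ and $V\setminus\{v\}=B_1\cup\dots\cup B_\omega$ is an optimal coloring of $H-v$, then $|K\cap B_t|\le 1$ for each $t$, so either $v\notin K$ and $K$ meets every $B_t$ exactly once, or $v\in K$ and $K$ misses exactly one $B_t$ and meets each of the others exactly once. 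Applying this: $C_0$ is disjoint from $A_0$, hence contains no $v_i$, hence meets every color class of every coloring (and no $A_0$-vertex) as required; and if $A_j$ is a class of the coloring of $H-v_i$, then $C_j$, being disjoint from $A_j$, must contain $v_i$, so $|C_j\cap A_0|=1$, $C_j$ meets each other class of that coloring exactly once, and since $C_j\cap A_0=\{v_i\}$ it contains no $v_{i'}$ with $i'\neq i$ and therefore meets every class of every other coloring exactly once. Writing this out closes the argument; as it stands your proposal is a correct plan whose central combinatorial step is deferred rather than proved.
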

\begin{Lemma}[{\cite[1.2]{sptheorem}}]
	\label{strong}
	A graph is perfect if and only if it has neither odd holes nor odd antiholes as induced subgraph.
\end{Lemma}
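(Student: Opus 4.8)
The final statement is the Strong Perfect Graph Theorem, so I will describe the architecture of a proof rather than carry out its enormous technical core. The \emph{only if} direction is elementary, because perfection is hereditary: the defining condition $\chi(H)=\omega(H)$ is quantified over \emph{all} induced subgraphs $H$, so it suffices to check that odd holes and odd antiholes are themselves imperfect. For an odd hole $C_{2k+1}$ with $k\geq 2$ one has $\omega(C_{2k+1})=2$ but $\chi(C_{2k+1})=3$; for an odd antihole $\overline{C_{2k+1}}$ one has $\omega(\overline{C_{2k+1}})=k$ (a maximum clique corresponds to a maximum stable set of $C_{2k+1}$) while each colour class is an edge of $C_{2k+1}$, forcing $\chi(\overline{C_{2k+1}})=k+1$. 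Hence a graph containing either configuration as an induced subgraph cannot be perfect, which settles this direction.

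For the \emph{if} direction — every graph with no odd hole and no odd antihole (a \emph{Berge} graph) is perfect — I would argue by minimal counterexample. Suppose $G$ is a Berge graph that is imperfect while every proper induced subgraph of $G$ is perfect. By Lov\'asz's structure theory of minimal imperfect graphs one knows that such a $G$ is partitionable with $|V(G)|=\alpha(G)\omega(G)+1$, that it has no star cutset, and, crucially via Lemma \ref{weak}, that $\overline{G}$ is again minimal imperfect. The plan is to contradict this rigidity using a structural dichotomy for Berge graphs.

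The heart of the argument is a decomposition theorem: every Berge graph is either \emph{basic} — a bipartite graph, the complement of a bipartite graph, the line graph of a bipartite graph, the complement of such a line graph, or a double split graph — or else $G$ or $\overline{G}$ admits a $2$-join, or $G$ admits a balanced skew partition. Granting this, I would finish as follows. Each basic class is perfect: bipartite graphs by K\"onig's theorem, line graphs of bipartite graphs by K\"onig's edge-colouring theorem, their complements by Lemma \ref{weak}, and double split graphs by direct inspection. On the other hand, a minimal imperfect graph can admit none of the listed decompositions: it has no balanced skew partition (the skew-partition obstruction, which must itself be proved as a lemma along the way), and it has no $2$-join, nor, by Lemma \ref{weak}, a $2$-join in the complement. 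Applying the dichotomy to the minimal imperfect Berge graph $G$ therefore forces $G$ to be basic, hence perfect, a contradiction; so no minimal imperfect Berge graph exists and every Berge graph is perfect.

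The main obstacle, by a wide margin, is the decomposition theorem itself: proving that every Berge graph is basic or decomposes demands a long and delicate case analysis organised around the possible induced configurations (prisms, pyramids, and the various wheels), together with connectivity tools such as the Roussel--Rubio lemma controlling how a Berge graph can attach to an induced path. Establishing the absence of skew partitions and $2$-joins in minimal imperfect graphs is substantial but comparatively self-contained. In the present paper this lemma is quoted from \cite{sptheorem}, and in the sequel only the hereditary \emph{only if} direction, combined with the self-complementarity supplied by Lemma \ref{weak}, will actually be invoked.
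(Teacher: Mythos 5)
The paper does not prove this lemma at all: it is quoted verbatim from \cite{sptheorem} as the Strong Perfect Graph Theorem, and nothing in the paper depends on more than the statement itself. Measured against that, your write-up is more than the paper offers, and what you do prove is correct: the \emph{only if} direction is complete and right ($\omega(C_{2k+1})=2<3=\chi(C_{2k+1})$, and for the antihole $\omega=k$ while every colour class is a clique of $C_{2k+1}$, hence has at most two vertices, forcing $\chi\geq\lceil(2k+1)/2\rceil=k+1$). Your sketch of the converse also matches the actual architecture of \cite{sptheorem}: minimal imperfect counterexample, Lov\'asz/Padberg rigidity and Chv\'atal's star-cutset lemma, the decomposition theorem (basic, or a $2$-join in $G$ or $\overline{G}$, or a balanced skew partition), perfection of the basic classes, and the non-decomposability of minimal imperfect graphs.

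The one genuine gap is the one you name yourself: the decomposition theorem for Berge graphs is stated but not proved, and it is not a lemma one can wave at --- it is essentially the entire 150-page content of \cite{sptheorem}, and without it the \emph{if} direction has no content. The same applies, less severely, to the balanced-skew-partition obstruction for minimal imperfect graphs, which is itself a nontrivial theorem of that paper. So as a self-contained proof your text does not succeed; as a correct and honest account of why the cited result is true and how it is proved elsewhere, it is accurate and appropriately deployed, and you correctly observe that the present paper only ever uses the easy hereditary direction together with Lemma \ref{weak}.
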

The first characterization is called the (weak) perfect graph theorem and the second one is called the strong perfect graph theorem.
Now, we introduce the stable set polytopes of finite simple graphs. Let $S(G)$ denote the set of stable sets of $G$.
Then one has $\emptyset \in S(G)$ and $\{ i \} \in S(G)$
for each $i \in [d]$. Moreover, we can regard $S(G)$ as a simplicial complex on $[d]$.
For a simplicial complex $\Delta$ on $[d]$, we let $\Pc_{\Delta}$ be the $(0,1)$-polytope which is the convex hull of 
$\{\rho(\sigma) \, : \, \sigma \in \Delta \}$ 
in $\RR^d$. 
Then one has $\dim \Pc_{\Delta} = d$.
The {\em stable set polytope} $\Qc_G \subset \RR^{d}$
of $G$ is the lattice polytope $\Pc_{S(G)}$.
It is known that every chain polytope is the stable set polytope of a perfect graph.
In fact, for a finite partially ordered set $P$ on $[d]$, its \textit{comparability graph} $G_P$ is the finite simple graph on $[d]$ such that $\{i,j\} \in E(G_P)$ if and only if $i < j$ or $j < i$ in $P$.
Then one has $\Cc_P=\Qc_{G_P}$. 
Since every comparability graph is perfect, the class of chain polytopes
is contained in the class of the stable set polytopes of perfect graphs.

Next, we introduce the toric ideals of lattice polytopes.
Let $K[{\bf t^{\pm1}}, s] 
= K[t_{1}^{\pm1}, \ldots, t_{d}^{\pm1}, s]$
be the Laurent polynomial ring in $d+1$ variables over a field $K$. 
If $\ab = (a_{1}, \ldots, a_{d}) \in \ZZ^{d}$, then
${\bf t}^{\ab}s$ is the Laurent monomial
$t_{1}^{a_{1}} \cdots t_{d}^{a_{d}}s \in K[{\bf t^{\pm1}}, s]$. 
In particular ${\bf t}^{\bf 0}s = s$.
Let  $\Pc \subset \RR^{d}$ be a lattice polytope of dimension $d$ and $\Pc \cap \ZZ^d=\{\ab_1,\ldots,\ab_n\}$.
Then, the \textit{toric ring}
of $\Pc$ is the subalgebra $K[\Pc]$ of $K[{\bf t^{\pm1}}, s] $
generated by
$\{\tb^{\ab_1}s ,\ldots,\tb^{\ab_n}s \}$ over $K$.
We regard $K[\Pc]$ as a homogeneous algebra by setting each $\text{deg } \tb^{\ab_i}s=1$.
Let $K[\xb]=K[x_1,\ldots, x_n]$ denote the polynomial ring in $n$ variables over $K$.
The \textit{toric ideal} $I_{\Pc}$ of $\Pc$ is the kernel of the surjective homomorphism $\pi : K[\xb] \rightarrow K[\Pc]$
defined by $\pi(x_i)=\tb^{\ab_i}s$ for $1 \leq i \leq n$.
It is known that $I_{\Pc}$ is generated by homogeneous binomials.
See, e.g.,  \cite{Sturmfels}.
We refer the reader to \cite[Chapters $1$ and $5$]{dojoEN} for basic facts on toric ideals together with Gr\"obner bases.  

A lattice polytope $\Pc \subset \RR^d$ of dimension $d$ is called  \textit{compressed} if the initial ideal of the toric ideal $I_{\Pc}$ is squarefree with respect to any reverse lexicographic order (\cite{Sturmfels}).
It is known that any order polytope and any chain polytope are compressed (\cite[Theorem 1.1]{comp}).
Moreover, we know that when the stable set polytope of a finite simple graph is compressed.
\begin{Lemma}[{\cite[Theorem 2.6]{harmony}}]
	\label{comp}
	Let $\Delta$ be a simplicial complex on $[d]$.
	Then the lattice polytope $\Pc_{\Delta}$ is compressed if and only if there exists a perfect graph $G$ on $[d]$ such that $\Pc_{\Delta}=\Qc_{G}$.
\end{Lemma}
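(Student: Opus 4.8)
The plan is to reduce the statement to the known equivalence between compressedness and the $2$-level property, and then to exploit the strong perfect graph theorem (Lemma~\ref{strong}). Concretely, I would use the characterization (due to Sullivant) that a full-dimensional lattice polytope $\Pc$ is compressed if and only if it is $2$-level, meaning that for every facet $F$ with primitive supporting inequality $\langle \ab,\xb\rangle \le b$, every vertex $\vb$ of $\Pc$ satisfies $\langle \ab,\vb\rangle \in \{b-1,b\}$. Two elementary observations feed into the argument: a face of a compressed polytope is again compressed, and for $W\subseteq[d]$ the face $\Pc_{\Delta}\cap\{x_j=0 : j\notin W\}$ has vertices $\rho(\tau)$ with $\tau\in\Delta$, $\tau\subseteq W$, so it equals $\Pc_{\Delta_W}$ for the induced subcomplex $\Delta_W=\{\tau\in\Delta:\tau\subseteq W\}$. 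In particular, if $\Pc_{\Delta}$ is compressed then so is every $\Pc_{\Delta_W}$.

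For the direction ``$\Pc_{\Delta}=\Qc_G$ with $G$ perfect $\Rightarrow$ $\Pc_{\Delta}$ compressed,'' I would invoke Chv\'atal's theorem that the stable set polytope of a perfect graph is cut out by the inequalities $x_i\ge 0$ and the clique inequalities $\sum_{i\in C}x_i\le 1$ ($C$ a clique). Each such facet is immediately $2$-level: on $-x_i\le 0$ the vertices take values in $\{0,-1\}$, and on $\sum_{i\in C}x_i\le 1$ a stable set meets the clique $C$ in at most one vertex, so the values lie in $\{0,1\}$. Hence $\Qc_G$ is $2$-level and therefore compressed.

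For the converse I would assume $\Pc_{\Delta}$ compressed and first show $\Delta$ is a flag complex. If not, pick a minimal non-face $\sigma$ with $|\sigma|=k\ge 3$; then $\Delta_\sigma$ consists of all proper subsets of $\sigma$, so $\Pc_{\Delta_\sigma}$ is the convex hull of all $(0,1)$-vectors of $\RR^{\sigma}$ except $\rho(\sigma)$. The inequality $\sum_{i\in\sigma}x_i\le k-1$ is facet-defining, yet its value $\sum_{i\in\sigma}x_i$ ranges over $0,1,\dots,k-1$ on the vertices, giving $k\ge 3$ levels and contradicting $2$-levelness. Thus $\Delta$ is flag, and putting $E(G)=\{\{i,j\}:\{i,j\}\notin\Delta\}$ produces a graph with $S(G)=\Delta$, i.e.\ $\Pc_{\Delta}=\Qc_G$, with $\Pc_{\Delta_W}=\Qc_{G[W]}$ for induced subgraphs.

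Finally I would prove $G$ is perfect by ruling out induced odd holes and odd antiholes (Lemma~\ref{strong}). An induced odd hole $C=C_{2\ell+1}$ ($\ell\ge 2$) yields the compressed polytope $\Qc_C$, but the odd-hole rank inequality $\sum_{i\in V(C)}x_i\le \ell$ is facet-defining while the value $\sum x_i$ attains $0,1,\dots,\ell$ ($\ell+1\ge 3$ levels), a contradiction. An induced odd antihole $\overline{C}_{2\ell+1}$ (necessarily $2\ell+1\ge 7$, as $\overline{C}_5=C_5$ is already excluded) has independence number $2$, and the rank inequality $\sum_i x_i\le 2$ is facet-defining since the $2\ell+1$ weight-$2$ stable sets (the edges of $C$) give affinely independent vertices; here $\sum x_i$ attains $0,1,2$, again contradicting $2$-levelness. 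I expect the main obstacle to be the facet-defining verifications: pinning down that $\Qc_G$ has exactly the nonnegativity and clique facets in the perfect case, and that the rank inequalities above are genuine facets for odd holes and antiholes. Once the facet structures are established, every $2$-level check is a one-line count of coordinate sums.
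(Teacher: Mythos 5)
The paper does not actually prove Lemma~\ref{comp}: it is quoted from \cite[Theorem 2.6]{harmony} and used as a black box, so there is no internal proof to compare yours against. Judged on its own, your argument is correct and is essentially the argument of the cited source. The forward direction via Chv\'atal's description of $\Qc_G$ for perfect $G$ (nonnegativity and clique facets) together with the observation that both families of inequalities are $2$-level on stable sets is exactly right. For the converse, your reduction to induced subcomplexes is sound: since $\Delta$ contains all singletons, $\Pc_{\Delta}\cap\{x_j=0: j\notin W\}$ is a face equal to $\Pc_{\Delta_W}$, and compressedness (equivalently $2$-levelness) passes to faces; the non-flag case is killed by the facet $\sum_{i\in\sigma}x_i\le|\sigma|-1$ of $\conv(\{0,1\}^{\sigma}\setminus\{\rho(\sigma)\})$, and the odd hole/antihole cases by the rank inequalities, invoking Lemma~\ref{strong}. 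The only places where real work is hidden are the classical inputs you already flag explicitly: Sullivant's equivalence of compressed and $2$-level, heredity under taking faces, and Padberg's facet-definingness of the rank inequalities for odd cycles and their complements (your affine-independence counts, e.g.\ the nonsingularity of the odd circulant for the antihole, settle the latter). With those references supplied, the proof is complete.
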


We now turn to the review of indispensable lemmata 
for our proofs of Theorems \ref{main} and \ref{deltath}.

\begin{Lemma}[{\cite[(0.1), p. 1914]{OH}}]
	\label{OHroots}
Work with the same situation above,
	Then a finite set $\Gc$ of $I_{\Pc}$ is a Gr\"{o}bner basis of $I_{\Pc}$ with respect to a monomial order $<$ on $K[\xb]$ if and only if
	$\pi(u) \neq \pi(v)$
	for all $u \notin (\{ {\rm in}_<(g) : g \in {\mathcal G} \})$ and $v\notin \langle\{ {\rm in}_<(g) : g \in {\mathcal G}\}\rangle$ 
	with $u \neq v$.
\end{Lemma}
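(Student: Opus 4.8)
The plan is to prove the equivalence through the standard dictionary between standard monomials and a $K$-basis of the toric ring, exploiting the one feature special to toric ideals: the map $\pi$ sends each monomial of $K[\xb]$ to a single Laurent monomial. Throughout I would write $\ini_<(\Gc)=\langle \ini_<(g) : g \in \Gc\rangle$ for the ideal generated by the initial monomials, let $B$ be the set of monomials of $K[\xb]$ not lying in $\ini_<(\Gc)$ (these are the monomials named in the statement), and let $A$ be the set of monomials not lying in $\ini_<(I_{\Pc})$, i.e. the genuine standard monomials. Since $\Gc \subset I_{\Pc}$ one always has $\ini_<(\Gc) \subseteq \ini_<(I_{\Pc})$, hence $A \subseteq B$, and $\Gc$ is by definition a Gr\"obner basis of $I_{\Pc}$ exactly when $\ini_<(\Gc)=\ini_<(I_{\Pc})$, that is, when $A=B$. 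So the whole statement reduces to showing that $\pi$ is injective on $B$ if and only if $A=B$.

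The first thing I would record is the defining property of the toric kernel: for monomials $u,v$ one has $\pi(u)=\pi(v)$ if and only if $u-v \in \ker \pi = I_{\Pc}$, so distinct monomials with equal image produce a nonzero binomial of $I_{\Pc}$. Equally important, each $\pi(u)$ is a Laurent monomial, so \emph{distinct} images are automatically $K$-linearly independent in $K[\Pc]$. This is what lets me trade the injectivity hypothesis for a linear-independence statement.

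For the direction $A=B \Rightarrow$ injectivity, I would use that the images $\{\pi(u):u\in A\}$ of the standard monomials form a $K$-basis of $K[\xb]/I_{\Pc}\cong K[\Pc]$; in particular $\pi$ is injective on $A=B$. (Injectivity on $A$ also follows directly from the first observation: if $u\neq v$ in $A$ had $\pi(u)=\pi(v)$, then $u-v$ would be a nonzero element of $I_{\Pc}$ whose initial monomial is $u$ or $v$, forcing one of them into $\ini_<(I_{\Pc})$, contrary to $u,v\in A$.) For the converse I would apply the division algorithm with respect to $\Gc$: every $f\in K[\xb]$ reduces to a remainder $r$ that is a $K$-combination of monomials in $B$, with $f-r$ in the ideal generated by $\Gc$, hence in $I_{\Pc}$; therefore $\pi(f)=\pi(r)$ and $\{\pi(u):u\in B\}$ spans $K[\Pc]$. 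Under the injectivity hypothesis these are pairwise distinct monomials, so they are linearly independent, and hence a $K$-basis. Comparing with the basis $\{\pi(u):u\in A\}$: since $A\subseteq B$ and $\pi$ is injective on $B$, any $u\in B\setminus A$ would give a basis vector $\pi(u)$ lying in the span of the basis $\pi(A)$, which is impossible; thus $A=B$.

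The step I expect to be the crux --- and the only one that truly uses the toric hypothesis --- is the passage from ``the distinct images $\pi(u)$, $u\in B$, span $K[\Pc]$'' to ``they form a basis.'' This rests entirely on $\pi$ carrying monomials to monomials, so that distinctness alone forces linear independence; for a general surjection the stated equivalence would simply be false. Everything else is the routine division-algorithm spanning argument together with the comparison of two bases indexed by the nested monomial sets $A\subseteq B$.
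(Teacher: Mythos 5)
Your argument is correct; note that the paper itself states this lemma without proof, quoting it from Ohsugi--Hibi \cite{OH}, and your route --- identifying ``Gr\"obner basis'' with the equality of the two sets of standard monomials, using the division algorithm to see that the images of the $\Gc$-standard monomials span $K[\Pc]$, and exploiting that $\pi$ sends monomials to Laurent monomials so that distinct images are linearly independent --- is precisely the standard argument behind that citation. No gaps; the only cosmetic point is that the lemma's statement has a typo (the condition on $u$ should, like that on $v$, refer to the monomial \emph{ideal} generated by the initial terms), which you have interpreted in the intended way.
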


\begin{Lemma}[{\cite[Lemma 1.1]{HMOS}}]
	\label{HMOS}
	Let $\Pc \subset \RR^d$ be a lattice polytope such that the origin of $\RR^d$ is contained 
	in its interior and $\Pc \cap \ZZ^d=\{\ab_1,\ldots,\ab_n \}$.
	Suppose that any lattice point in $\ZZ^{d+1}$ is a linear integer combination of the lattice points in $\Pc \times \{1\}$ and there exists an ordering of the variables $x_{i_1} < \cdots < x_{i_n}$ for which $\ab_{i_1}= \mathbf{0}$ such that the initial ideal $\textnormal{in}_{<}(I_{\Pc})$ of the toric ideal $I_{\Pc}$ with respect to the reverse lexicographic order $<$ on the polynomial ring $K[\xb]$
	induced by the ordering is squarefree.
	Then $\Pc$ is a reflexive polytope with the integer decomposition property.
\end{Lemma}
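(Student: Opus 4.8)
The plan is to establish the two assertions of the conclusion separately, extracting the integer decomposition property from the squarefreeness of the initial ideal and reflexivity from the fact that the smallest variable encodes the interior point $\mathbf{0}$. Write $A=\{(\ab_1,1),\ldots,(\ab_n,1)\}\subset\ZZ^{d+1}$, so that $K[\Pc]=K[A]$ and $I_{\Pc}=I_A$. First I would invoke the classical correspondence (see \cite{Sturmfels}) between a reverse lexicographic initial ideal of $I_A$ and a regular triangulation $\Delta_<$ of $\Pc$: squarefreeness of $\textnormal{in}_<(I_{\Pc})$ is equivalent to $\Delta_<$ being \emph{unimodular}. Since a polytope admitting a unimodular triangulation has a normal toric ring, and since the first hypothesis guarantees $\ZZ A=\ZZ^{d+1}$ so that unimodularity is measured against the full lattice, the normality of $K[\Pc]$ translates directly into the integer decomposition property of $\Pc$. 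This disposes of one half of the conclusion.

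For reflexivity the crucial observation is that the distinguished variable $x_{i_1}$, which maps to $s=\tb^{\mathbf{0}}s$ under $\pi$ and is the smallest in the reverse lexicographic order, cannot divide the initial monomial of any binomial in the reduced Gr\"obner basis. Indeed, if $g=x^{\ub}-x^{\wb}\in I_{\Pc}$ is a binomial with $\gcd(x^{\ub},x^{\wb})=1$ and $x_{i_1}\mid x^{\ub}$, then, the two monomials having equal degree, the last (that is, the $x_{i_1}$-) coordinate of $\ub-\wb$ is strictly positive, so the reverse lexicographic rule forces $x^{\wb}>x^{\ub}$ and $x^{\ub}$ is not the initial term. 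Hence $x_{i_1}$ divides no minimal generator of the squarefree ideal $\textnormal{in}_<(I_{\Pc})$, which, read as the Stanley--Reisner ideal of $\Delta_<$, means that the vertex $\ab_{i_1}=\mathbf{0}$ lies in every maximal face of $\Delta_<$. In other words, $\Delta_<$ is a unimodular triangulation of $\Pc$ all of whose maximal simplices have the interior point $\mathbf{0}$ as a vertex. I expect this reverse lexicographic bookkeeping to be the main obstacle, since everything downstream depends on pinning $\mathbf{0}$ inside every cell.

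It remains to convert this star structure into reflexivity. Each facet $F$ of $\Pc$ carries a primitive outer normal $\cb_F\in\ZZ^d$ and a supporting hyperplane $\langle\cb_F,\xb\rangle=h_F$ with $h_F>0$, and $\Pc$ is reflexive precisely when $h_F=1$ for every $F$. The boundary $(d-1)$-faces of the maximal simplices of $\Delta_<$ triangulate $\partial\Pc$, so $F$ contains some unimodular simplex $\conv\{\vb_1,\ldots,\vb_d\}$ for which $\conv\{\mathbf{0},\vb_1,\ldots,\vb_d\}$ is a maximal cell; unimodularity gives $\lvert\det[\vb_1,\ldots,\vb_d]\rvert=1$, while $\vb_j\in F$ gives $\langle\cb_F,\vb_j\rangle=h_F$ for all $j$. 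Solving $[\vb_1,\ldots,\vb_d]^{\!\top}\cb_F=h_F\mathbf{1}$ shows $\cb_F=h_F\cdot([\vb_1,\ldots,\vb_d]^{-1})^{\!\top}\mathbf{1}$, so $h_F$ divides every coordinate of the integer vector $\cb_F$; primitivity of $\cb_F$ then forces $h_F=1$. As $\partial\Pc$ is covered, every facet sits at lattice distance $1$ from the origin, whence the dual $\Pc^{\vee}$ is a lattice polytope and $\Pc$ is reflexive, completing the plan.
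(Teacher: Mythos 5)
The paper does not prove this lemma but imports it from \cite{HMOS}, and your argument is correct and is essentially the standard proof given there: squarefreeness of the reverse lexicographic initial ideal yields a unimodular (hence IDP-certifying) triangulation once $\ZZ A=\ZZ^{d+1}$, the smallest variable $x_{i_1}$ with $\ab_{i_1}=\mathbf{0}$ cannot occur in a leading term, so the triangulation is a star at the origin, and the determinant computation pins every facet at lattice distance $1$. No gaps; this matches the intended route.
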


\begin{Lemma}[{\cite[Lemma 9.1.3]{Monomial}}]
	\label{flag}
	Let $\Delta$ be a simplicial complex on $[d]$.
	Then there exists a finite simple graph $G$ on $[d]$ such that $\Delta=S(G)$ is and only if $\Delta$ is flag, i.e., every minimal nonface of $\Delta$ is a 2-elements subset of $[d]$.
\end{Lemma}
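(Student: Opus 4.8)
The plan is to prove the two implications separately by setting up an explicit dictionary between the two-element nonfaces of $\Delta$ and the edges of a graph. Recall that a face of $\Delta$ is a member of $\Delta$, a nonface is a subset of $[d]$ not lying in $\Delta$, and a minimal nonface is a nonface all of whose proper subsets are faces; moreover $S(G)$ is precisely the collection of stable sets of $G$. Throughout I would use the two standing facts that $\Delta$ is closed under passing to subsets and that every nonface contains at least one minimal nonface.

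First I would establish the forward direction: assuming $\Delta = S(G)$, I show $\Delta$ is flag. Let $F$ be a minimal nonface. Since $\emptyset$ and every singleton are stable sets, hence faces, we have $|F|\geq 2$. As $F\notin S(G)$, the set $F$ is not stable, so there are distinct $i,j\in F$ with $\{i,j\}\in E(G)$; then $\{i,j\}$ is itself not stable and therefore a nonface contained in $F$. Minimality of $F$ forces $\{i,j\}=F$, so $|F|=2$. Thus every minimal nonface has exactly two elements, which is the flag condition.

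For the converse I would construct the graph directly. Assuming $\Delta$ is flag, no singleton can be a minimal nonface, and since the only proper subset of a singleton is $\emptyset\in\Delta$, no singleton is a nonface at all; hence $\{i\}\in\Delta$ for every $i\in[d]$. Define a graph $G$ on $[d]$ by declaring $\{i,j\}\in E(G)$ exactly when $\{i,j\}\notin\Delta$. Because all singletons are faces, these two-element nonfaces coincide with the minimal nonfaces of $\Delta$. I would then verify $\Delta=S(G)$ by two inclusions. If $\sigma\in\Delta$, then by subset-closure no two-element subset of $\sigma$ is a nonface, so $\sigma$ contains no edge of $G$ and hence $\sigma\in S(G)$. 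Conversely, if $\sigma\in S(G)$ but $\sigma\notin\Delta$, then $\sigma$ contains some minimal nonface $F$; by flagness $F=\{i,j\}$ is an edge of $G$ with $i,j\in\sigma$, contradicting stability. Therefore $S(G)=\Delta$, as desired.

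Since the whole argument is a translation between the two-element nonfaces of $\Delta$ and the edges of $G$, there is no deep obstacle. The only point that genuinely requires care is the backward direction: one must observe that the flag hypothesis already forces every vertex (and the empty set) to be a face, so that the candidate $G$ really lives on all of $[d]$ and its edges match the minimal nonfaces exactly; the two inclusions then hinge on combining subset-closure of $\Delta$ with the fact that a nonface always swallows a minimal nonface.
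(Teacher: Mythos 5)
Your argument is correct and complete: the forward direction correctly identifies a non-edge witness inside any minimal nonface and invokes minimality, and the backward direction correctly builds $G$ from the two-element nonfaces and verifies both inclusions, including the necessary check that flagness forces all singletons to be faces. The paper itself gives no proof of this lemma---it is quoted from Herzog--Hibi, \emph{Monomial Ideals}, Lemma 9.1.3---and your argument is precisely the standard one found there, so there is nothing to reconcile.
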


\begin{Lemma}[{\cite[Corollary 35.6]{HibiRedBook}}]
	\label{facet}
	Let $\Pc \subset \RR^d$ be a lattice polytope of
	dimension $d$ containing the origin in its interior. 
	Then a point $\ab \in \RR^d$ is a vertex of $\Pc^\vee$ if 
	and only if $\Hc \cap \Pc$ is a facet of $\Pc$,
	where $\Hc$ is the hyperplane
	$$\left\{ \xb \in \RR^d : \langle \ab, \xb \rangle =1 \right\}$$
	in $\RR^d$.
\end{Lemma}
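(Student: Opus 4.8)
The plan is to prove the two implications directly from the definition of the polar dual, using only that the origin lies in the interior of $\Pc$. First I would record the basic set-up: since ${\bf 0}$ is interior to the full-dimensional polytope $\Pc$, the dual $\Pc^\vee$ is again a full-dimensional bounded polytope containing ${\bf 0}$ in its interior, and for any $\ab \in \Pc^\vee$ one has $\langle \ab, \xb\rangle \le 1$ for all $\xb \in \Pc$ while $\langle \ab, {\bf 0}\rangle = 0 < 1$. Thus $\Hc$ is a supporting hyperplane of $\Pc$ lying on the side $\langle \ab, \xb \rangle \le 1$, and $F := \Hc \cap \Pc$ is a (possibly empty) proper face of $\Pc$ whose affine span $\aff(F) \subset \Hc$ does not contain ${\bf 0}$.

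For the implication ``$F$ a facet $\Rightarrow \ab$ a vertex'', suppose $F = \Hc \cap \Pc$ is a facet. Then $\ab \in \Pc^\vee$ by the supporting property, and I would show $\ab$ is extreme. If $\ab = \tfrac{1}{2}(\bb + \cb)$ with $\bb, \cb \in \Pc^\vee$, then for every $\yb \in F$ the inequalities $\langle \bb, \yb\rangle \le 1$ and $\langle \cb, \yb\rangle \le 1$ average to $\langle \ab, \yb\rangle = 1$, forcing $\langle \bb, \yb\rangle = \langle \cb, \yb\rangle = 1$ on all of $F$. Since $F$ is a facet, $\aff(F)$ is exactly the hyperplane $\Hc$, which misses the origin; the only $\bb \in \RR^d$ with $\langle \bb, \xb\rangle = 1$ for all $\xb \in \Hc$ is $\ab$ itself (the direction conditions force $\bb \in \RR\ab$, and normalizing at one point of $\Hc$ gives $\bb = \ab$). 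Hence $\bb = \cb = \ab$, so $\ab$ is a vertex of $\Pc^\vee$.

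For the converse ``$\ab$ a vertex $\Rightarrow F$ a facet'', suppose $\ab$ is a vertex of $\Pc^\vee$. Then $\ab \ne {\bf 0}$ and $\max_{\yb \in \Pc}\langle \ab, \yb\rangle = 1$, since if this maximum were strictly less than $1$ then $\ab$ would lie in the interior of $\Pc^\vee$; thus $F$ is a nonempty proper face. It remains to show $\dim F = d-1$, and I would argue by contradiction. Because $\aff(F) \subset \Hc$ avoids the origin, the linear span $\mathrm{span}_{\RR}(F)$ has dimension $\dim F + 1$, so its orthogonal complement has dimension $d - 1 - \dim F$. If $\dim F \le d-2$, choose a nonzero $\cb$ in this complement, whence $\langle \cb, \yb\rangle = 0$ for all $\yb \in F$. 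For the finitely many vertices $\yb$ of $\Pc$ outside $F$ one has $\langle \ab, \yb\rangle < 1$ strictly, so there is $\epsilon > 0$ with $\ab + t\cb \in \Pc^\vee$ for all $|t| < \epsilon$; this exhibits $\ab$ as an interior point of a segment in $\Pc^\vee$, contradicting that it is a vertex. Therefore $\dim F = d-1$, i.e.\ $F$ is a facet.

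The main obstacle I anticipate is the bookkeeping in the converse: one must use precisely that $\aff(F)$ misses the origin, so that $\mathrm{span}_{\RR}(F)$ gains exactly one dimension over $\dim F$ and the perpendicular space has the correct dimension $d-1-\dim F$, and one must invoke the finiteness of the vertex set of $\Pc$ to extract a uniform $\epsilon$ producing a genuine two-sided perturbation $\ab + t\cb$ inside $\Pc^\vee$. The forward direction is comparatively routine once the uniqueness of the functional taking the value $1$ on a facet-spanning hyperplane is recorded.
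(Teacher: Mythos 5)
The paper does not prove this lemma at all --- it is imported verbatim from \cite[Corollary 35.6]{HibiRedBook} and used as a black box --- so there is no in-paper argument to compare against. Your proof is a correct, self-contained derivation of the standard facet--vertex correspondence under polar duality, and it is essentially the textbook argument. Both directions check out: in the forward direction you correctly reduce to the uniqueness of the affine functional equal to $1$ on a hyperplane missing the origin (which pins down $\bb\in\RR\ab$ and then $\bb=\ab$), having first noted that $\aff(F)=\Hc$ supports $\Pc$ on the side containing ${\bf 0}$, so that $\ab\in\Pc^\vee$; in the converse you correctly rule out $\max_{\yb\in\Pc}\langle\ab,\yb\rangle<1$ (which would put $\ab$ in the interior of $\Pc^\vee$, as all the finitely many defining inequalities would be strict) and then perturb $\ab$ inside $\Pc^\vee$ along a direction orthogonal to $\mathrm{span}_\RR(F)$ when $\dim F\le d-2$. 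The two points you flag as delicate are exactly the right ones: that $\mathrm{span}_\RR(F)$ has dimension $\dim F+1$ because $\aff(F)$ misses the origin, and that the finiteness of the vertex set of $\Pc$ yields a uniform $\epsilon$ for the two-sided perturbation. The only cosmetic remark is that it suffices, and is what you implicitly do, to verify $\langle\ab+t\cb,\yb\rangle\le 1$ on the vertices of $\Pc$ and invoke convexity; stating that explicitly would make the last step airtight.
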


\begin{Lemma}[{\cite[Corollary 6.1.5]{Monomial}}]
	\label{lem:hilbert}
	Let $S$ be a polynomial ring and $I \subset S$ a graded ideal of $S$.
	Let $<$ be a monomial order on $S$.
	Then $S/I$ and $S/\textnormal{in}_{<}(I)$ have the same Hilbert function.
\end{Lemma}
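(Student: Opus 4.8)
The plan is to exhibit a single $K$-basis that serves simultaneously for $S/I$ and for $S/\textnormal{in}_<(I)$, namely the residue classes of the \emph{standard monomials}, i.e.\ the monomials $u$ of $S$ with $u \notin \textnormal{in}_<(I)$. Since $I$ is graded and each of its homogeneous elements $f$ has an initial monomial $\textnormal{in}_<(f)$ of the same degree, the ideal $\textnormal{in}_<(I)$ is again a graded monomial ideal, so the Hilbert functions of both quotients are well defined and it suffices to prove that, in each fixed degree $n$, the standard monomials of degree $n$ form a $K$-basis of $(S/I)_n$ and of $(S/\textnormal{in}_<(I))_n$.

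For the quotient by the monomial ideal $\textnormal{in}_<(I)$ there is nothing to do: the monomials not contained in a monomial ideal descend to a $K$-basis of the quotient, graded by degree. The substance of the lemma is therefore that the standard monomials also descend to a $K$-basis of $S/I$, and I would establish the two halves separately. For \emph{linear independence}, suppose some nonzero $K$-linear combination $f$ of standard monomials lies in $I$; then $\textnormal{in}_<(f)$ is on one hand one of the standard monomials occurring in $f$ and on the other hand an element of $\textnormal{in}_<(I)$, which is absurd. For \emph{spanning}, I would divide an arbitrary homogeneous $f \in S$ by a Gr\"obner basis $\mathcal{G}$ of $I$: the resulting remainder $r$ has no monomial lying in $\textnormal{in}_<(I)$ and satisfies $f - r \in I$, so $r$ is a $K$-linear combination of standard monomials congruent to $f$ modulo $I$. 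Both steps respect the grading, because reduction by homogeneous elements preserves degree.

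Matching these bases degree by degree, the number of standard monomials of degree $n$ computes both $\dim_K (S/I)_n$ and $\dim_K (S/\textnormal{in}_<(I))_n$, whence the Hilbert functions coincide. The only genuinely delicate point is the spanning argument, which rests on the termination of the division algorithm; this in turn relies on the well-ordering property of the monomial order, guaranteeing that the leading monomials produced during successive reductions strictly decrease and hence that the process stops after finitely many steps.
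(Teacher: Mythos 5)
Your argument is correct and is precisely the classical Macaulay-type proof via standard monomials that underlies the cited reference (Herzog--Hibi, Corollary 6.1.5); the paper itself states this lemma with a citation and gives no proof of its own. The only point worth making explicit is that, since $I$ is graded, one may take the Gr\"obner basis $\mathcal{G}$ to consist of homogeneous elements (e.g.\ the reduced Gr\"obner basis), which is what justifies your remark that the division step respects the grading.
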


\begin{Lemma}
	Let $\Pc \subset \RR^d$ be a lattice polytope of dimension $d$.
	If $\Pc$ has the integer decomposition property,
	then the Ehrhart $\delta$-polynomial of $\Pc$ coincides with the $h$-polynomial of $K[\Pc]$.
\end{Lemma}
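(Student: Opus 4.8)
\emph{Proof proposal.} The plan is to compare the Ehrhart series and the Hilbert series of $K[\Pc]$ term by term; the integer decomposition property is exactly what is needed to identify the two. Throughout let $\Pc \cap \ZZ^d = \{\ab_1,\ldots,\ab_N\}$, and recall that $K[\Pc]$ is graded by $\deg \tb^{\ab_i}s = 1$.

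First I would describe the graded components of $K[\Pc]$ explicitly. Since each generator $\tb^{\ab_i}s$ carries exactly one factor of $s$, a degree-$m$ element of $K[\Pc]$ is a $K$-linear combination of products of $m$ generators, and every such product is a single Laurent monomial
\[
\tb^{\ab_{i_1}}s \cdots \tb^{\ab_{i_m}}s = \tb^{\bb}s^m, \qquad \bb = \ab_{i_1}+\cdots+\ab_{i_m}.
\]
As distinct exponent vectors $\bb$ yield linearly independent monomials in $K[\tb^{\pm 1},s]$, the distinct monomials $\tb^{\bb}s^m$ form a $K$-basis of the component $(K[\Pc])_m$. Hence $\dim_K (K[\Pc])_m$ equals the number of $\bb \in \ZZ^d$ that are expressible as a sum of $m$ lattice points of $\Pc$.

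Second I would invoke the hypothesis to evaluate this count. Any sum $\ab_{i_1}+\cdots+\ab_{i_m}$ of $m$ lattice points of $\Pc$ is an integer point of $m\Pc$, so the set of achievable $\bb$ is always contained in $m\Pc \cap \ZZ^d$. The integer decomposition property says precisely that, conversely, every $\bb \in m\Pc \cap \ZZ^d$ decomposes as such a sum. Therefore the two sets coincide, and $\dim_K (K[\Pc])_m = |m\Pc \cap \ZZ^d|$ for every $m \geq 1$; for $m=0$ both sides equal $1$.

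Finally I would assemble the Hilbert series. Since $\dim \Pc = d$, the toric ring $K[\Pc]$ has Krull dimension $d+1$, so its Hilbert series has the form $h(\lambda)/(1-\lambda)^{d+1}$ with $h(\lambda)$ the $h$-polynomial. By the previous step,
\[
\frac{h(\lambda)}{(1-\lambda)^{d+1}} = \sum_{m=0}^{\infty} \dim_K (K[\Pc])_m \, \lambda^m = 1 + \sum_{m=1}^{\infty} |m\Pc \cap \ZZ^d| \, \lambda^m,
\]
and multiplying through by $(1-\lambda)^{d+1}$ gives $h(\lambda) = \delta(\Pc,\lambda)$ by the definition of the Ehrhart $\delta$-polynomial. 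There is no genuinely hard step here; the only point requiring care is that without the integer decomposition property one obtains merely $\dim_K (K[\Pc])_m \le |m\Pc \cap \ZZ^d|$, so the hypothesis is used essentially and at exactly one place.
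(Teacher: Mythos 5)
Your argument is correct and is the standard one: identifying the degree-$m$ graded piece of $K[\Pc]$ with the set of lattice points expressible as a sum of $m$ points of $\Pc\cap\ZZ^d$, using the integer decomposition property to show this set is all of $m\Pc\cap\ZZ^d$, and then matching the Hilbert series $h(\lambda)/(1-\lambda)^{d+1}$ against the Ehrhart series. The paper states this lemma without proof, treating it as well known, so there is nothing to compare against; your writeup supplies exactly the expected justification, including the correct observation that the hypothesis is used only to upgrade the inequality $\dim_K(K[\Pc])_m\le |m\Pc\cap\ZZ^d|$ to an equality.
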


\begin{Lemma}[{\cite[Theorem 1.2]{HTperfect}}]
	\label{delta}    
	Let $G_1$ and $G_2$ be perfect graphs on $[d]$.
	Then one has
	\[
	\delta(\Omega(\Qc_{G_1},\Qc_{G_2}), \lambda) 
	= (1 + \lambda)\cdot \delta(\Gamma(\Qc_{G_1},\Qc_{G_2}), \lambda).
	\]
\end{Lemma}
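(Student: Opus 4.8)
The plan is to reduce the identity to a combinatorial statement about initial complexes. Since $G_1$ and $G_2$ are perfect, Lemma \ref{comp} shows that $\Qc_{G_1}$ and $\Qc_{G_2}$ are compressed. The key technical input, obtained from the Gr\"obner-basis construction used for the main theorem, is a reverse lexicographic order with the origin variable placed first for which $\ini_<(I_{\Gamma(\Qc_{G_1},\Qc_{G_2})})$ and $\ini_<(I_{\Omega(\Qc_{G_1},\Qc_{G_2})})$ are squarefree. By Lemma \ref{HMOS} this gives at once that $\Gamma(\Qc_{G_1},\Qc_{G_2})$ and $\Omega(\Qc_{G_1},\Qc_{G_2})$ are reflexive with the integer decomposition property, so by the preceding Lemma their Ehrhart $\delta$-polynomials equal the $h$-polynomials of $K[\Gamma(\Qc_{G_1},\Qc_{G_2})]$ and $K[\Omega(\Qc_{G_1},\Qc_{G_2})]$; and by Lemma \ref{lem:hilbert} these coincide with the $h$-polynomials of the Stanley--Reisner rings of the initial complexes $\Delta_\Gamma$ and $\Delta_\Omega$ determined by the squarefree initial ideals. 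It therefore suffices to prove that $\Delta_\Omega$ is the suspension of $\Delta_\Gamma$, i.e.\ the join $\Delta_\Gamma * \{\,\{N\},\{S\}\,\}$ with the two-point complex, because the $h$-polynomial of a join is the product of the two $h$-polynomials and the two-point complex has $h$-polynomial $1+\lambda$.

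Next I would pin down the lattice points. Writing $\Pc=\Qc_{G_1}$ and $\Qc=\Qc_{G_2}$, the lattice points of $\Gamma(\Qc_{G_1},\Qc_{G_2})$ are exactly the $\rho(\sigma)$ for stable sets $\sigma$ of $G_1$ together with the $-\rho(\tau)$ for stable sets $\tau$ of $G_2$, the origin $\mathbf 0=\rho(\emptyset)$ being the unique point shared by the two families. Lifting, the lattice points of $\Omega(\Qc_{G_1},\Qc_{G_2})$ are the $(\rho(\sigma),1)$ and $(-\rho(\tau),-1)$ together with the single interior point $(\mathbf 0,0)$. The crucial observation is that the shared origin of $\Gamma$ splits into three points of $\Omega$: the north pole $N=(\mathbf 0,1)$, the south pole $S=(\mathbf 0,-1)$, and the centre $w=(\mathbf 0,0)$, mapping in the toric ring to $us$, $u^{-1}s$ and $s$ respectively, where $u$ is the coordinate of the extra direction. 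Hence $\pi(x_N)\pi(x_S)=s^2=\pi(w)^2$, so the binomial $x_Nx_S-w^2$ lies in $I_{\Omega(\Qc_{G_1},\Qc_{G_2})}$, and with $w$ smallest in the order its initial monomial is $x_Nx_S$.

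The combinatorial heart is to show that $\ini_<(I_{\Omega(\Qc_{G_1},\Qc_{G_2})})$ is generated by $x_Nx_S$ together with the lifts of the generators of $\ini_<(I_{\Gamma(\Qc_{G_1},\Qc_{G_2})})$ under the identification $z\leftrightarrow w$ of the two origin variables, none of the latter involving $x_N$ or $x_S$. This exhibits $\{N,S\}$ as the unique new minimal nonface and makes both $N$ and $S$ cone points over $\Delta_\Gamma$, so that $\Delta_\Omega=\Delta_\Gamma*\{\,\{N\},\{S\}\,\}$ is precisely the suspension. I expect this Gr\"obner-basis verification to be the main obstacle: one must check, most cleanly via the root criterion of Lemma \ref{OHroots}, that no relations among the lifted generators beyond those inherited from $\Gamma$ are needed, and that the only interaction between the two poles is the single relation $x_Nx_S=w^2$.

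Granting the suspension structure, the multiplicativity of $h$-polynomials under joins gives $h_{\Delta_\Omega}(\lambda)=(1+\lambda)\,h_{\Delta_\Gamma}(\lambda)$, and unwinding the two reductions yields $\delta(\Omega(\Qc_{G_1},\Qc_{G_2}),\lambda)=(1+\lambda)\,\delta(\Gamma(\Qc_{G_1},\Qc_{G_2}),\lambda)$. As an alternative avoiding Gr\"obner bases, one could slice $n\,\Omega(\Qc_{G_1},\Qc_{G_2})$ by the hyperplanes of constant last coordinate and, using the integer decomposition property to control the lattice points of the resulting Minkowski-sum slices, verify the generating-function identity $\mathrm{Ehr}_{\Omega}(\lambda)=\frac{1+\lambda}{1-\lambda}\,\mathrm{Ehr}_{\Gamma}(\lambda)$ directly; but the heavy overlap between slices makes the bookkeeping delicate, so the initial-complex route seems preferable.
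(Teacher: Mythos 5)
This lemma is not proved in the paper at all: it is imported verbatim from \cite[Theorem 1.2]{HTperfect}, so there is no internal proof to compare against. That said, your strategy is sound and is essentially the one used in the cited source: establish squarefree reverse lexicographic initial ideals for $I_{\Gamma(\Qc_{G_1},\Qc_{G_2})}$ and $I_{\Omega(\Qc_{G_1},\Qc_{G_2})}$, observe that the only lattice points of $\Omega$ beyond the lifts of those of $\Gamma$ are the three points $(\mathbf 0,1)$, $(\mathbf 0,-1)$, $(\mathbf 0,0)$ replacing the single origin of $\Gamma$, deduce that the initial complex of $\Omega$ is the join of that of $\Gamma$ with the two-point complex $\{\{N\},\{S\}\}$ (the sole new minimal nonface being $x_Nx_S$, coming from $x_Nx_S-w^2$), and conclude by multiplicativity of $h$-polynomials under joins together with Lemmas \ref{lem:hilbert} and the $\delta=h$ identification for polytopes with the integer decomposition property. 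All of your auxiliary verifications (the description of $\Gamma\cap\ZZ^d$ and $\Omega\cap\ZZ^{d+1}$, the $h$-polynomial $1+\lambda$ of the two-point complex, the choice of initial term of $x_Nx_S-w^2$) are correct.

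The one genuine incompleteness is exactly the step you flag yourself: the Gr\"obner bases "used for the main theorem" are constructed in Sections 2 and 3 for the pair $(\Oc_P,\Qc_G)$, and their mixed generators $x_Iy_S$ with $\max(I)\cap S\neq\emptyset$ exploit the poset structure; they cannot be quoted directly for a pair of stable set polytopes. You must redo the root-criterion argument of Lemma \ref{OHroots} with the set $\Mc_{\Qc_{G_1}}\cup\Mc_{\Qc_{G_2}}\cup\{x_{S}y_{T}: S\cap T\neq\emptyset\}$ (plus $x_{\emptyset}y_{\emptyset}$ in the $\Omega$ case), using the relations $x_{S}y_{T}-x_{S\setminus T}y_{T\setminus S}$ and the fact that both polytopes are compressed (Lemma \ref{comp}). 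This verification is entirely parallel to the proofs of Propositions \ref{gamma} and \ref{omega} and is carried out in \cite{harmony,HTperfect}, so the gap is one of execution rather than of idea; once it is filled, your suspension argument completes the proof. Your alternative slicing approach is best abandoned for the reason you give.
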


\section{Type $\Gamma$}
In this section, we prove the  equivalence (i) $\Leftrightarrow$ (ii) $\Leftrightarrow$ (v) of Theorem \ref{main}.
In fact, we prove the following proposition.
\begin{Proposition}
\label{gamma}
Let $\Delta$ be a simplicial complex on $[d]$.  
Then the following conditions are equivalent:
\begin{itemize}
	\item[(i)] $\Gamma(\Oc_P,\Pc_{\Delta})$ is a reflexive polytope 
	for some finite partially ordered set $P$ on $ [d]$; 
	\item[(ii)] $\Gamma(\Oc_P,\Pc_{\Delta})$ is a reflexive polytope 
	for all finite partially ordered set $P$ on $[d]$;
	\item[(iii)] There exists a perfect graph $G$ on $[d]$ such that $\Pc_{\Delta}=\Qc_G$.
\end{itemize}
In particular, 
$\Gamma(\Oc_P,\Pc_{\Delta})$ is a reflexive polytope with the integer decomposition property 
for all finite partially ordered set $P$ on $[d]$ and all perfect graph $G$ on $[d]$.
\end{Proposition}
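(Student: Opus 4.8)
The plan is to establish the cycle (ii) $\Rightarrow$ (i) $\Rightarrow$ (iii) $\Rightarrow$ (ii), together with the ``in particular'' clause. The implication (ii) $\Rightarrow$ (i) is immediate. The conceptual pivot is Lemma \ref{comp}, by which condition (iii) is equivalent to the assertion that $\Pc_{\Delta}$ is compressed; thus the whole statement reduces to showing that, for a fixed $\Delta$, reflexivity of $\Gamma(\Oc_P,\Pc_{\Delta})$ for some (equivalently, all) $P$ is equivalent to compressedness of $\Pc_{\Delta}$. Since the origin lies in the interior of $\Gamma:=\Gamma(\Oc_P,\Pc_{\Delta})$, I will throughout use Lemma \ref{facet}: $\Gamma$ is reflexive if and only if every facet of $\Gamma$ is supported by a hyperplane $\langle \ab,\xb\rangle = 1$ with $\ab\in\ZZ^d$. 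A preliminary step, used in both nontrivial directions, is to record the lattice points: because $\Oc_P\subseteq[0,1]^d$ and $-\Pc_{\Delta}\subseteq[-1,0]^d$ are $(0,1)$-polytopes meeting only at $\mathbf{0}$, one checks that $\Gamma\cap\ZZ^d=\{\rho(I):I\in\Ic(P)\}\cup\{-\rho(\sigma):\sigma\in\Delta\}$, with $\mathbf{0}$ shared; in particular no mixed-sign integer point lies in $\Gamma$.

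For (iii) $\Rightarrow$ (ii), assume $\Pc_{\Delta}=\Qc_G$ with $G$ perfect, so that $\Pc_{\Delta}$ is compressed by Lemma \ref{comp}, while $\Oc_P$ is compressed for every $P$. I introduce a variable $x_{\mathbf 0}$ for the origin, variables $y_I$ for the points $\rho(I)$ with $I\in\Ic(P)$, and $z_{\sigma}$ for the points $-\rho(\sigma)$ with $\sigma\in\Delta$, and realize $K[\Gamma]$ via $x_{\mathbf 0}\mapsto s$, $y_I\mapsto\tb^{\rho(I)}s$ and $z_{\sigma}\mapsto\tb^{-\rho(\sigma)}s$. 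I then choose a reverse lexicographic order on $K[\xb]$ in which $x_{\mathbf 0}$ is the smallest variable and which refines orders witnessing the compressedness of $\Oc_P$ and of $\Qc_G$. The goal is to produce a squarefree $\ini_<(I_{\Gamma})$: the binomials in the $y_I$ alone (resp.\ the $z_{\sigma}$ alone) recover the known squarefree initial ideals of $I_{\Oc_P}$ (resp.\ $I_{\Qc_G}$), while the mixed binomials involving both families are controlled by $x_{\mathbf 0}$, whose minimality keeps it out of the initial ideal. I confirm that the resulting set is a Gr\"obner basis by means of Lemma \ref{OHroots}, i.e.\ by checking that $\pi(u)=\pi(v)$ forces $u=v$ for monomials $u,v$ not lying in the candidate initial ideal. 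Since the spanning hypothesis of Lemma \ref{HMOS} holds trivially (as $\mathbf{0},-\eb_1,\dots,-\eb_d\in\Gamma$) and the smallest variable $x_{\mathbf 0}$ corresponds to $\mathbf{0}$, Lemma \ref{HMOS} gives at once that $\Gamma$ is reflexive with the integer decomposition property. This proves (ii) and, simultaneously, the final ``in particular'' assertion.

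For (i) $\Rightarrow$ (iii) I argue by contraposition: assuming $\Pc_{\Delta}$ is not the stable set polytope of any perfect graph, I produce, for every $P$, a facet of $\Gamma$ with non-integral height-one normal, so that $\Gamma$ is reflexive for no $P$. By Lemmas \ref{flag} and \ref{strong} there are two cases. If $\Delta$ is not flag, choose a minimal nonface $F$ with $|F|=m\ge 3$; since $F\notin\Delta$, every face $\sigma\in\Delta$ satisfies $|F\cap\sigma|\le m-1$, so $\sum_{i\in F}x_i\le m-1$ is a tight valid inequality for $\Pc_{\Delta}$, and I set $W:=F$, $r:=m-1\ge 2$. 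If instead $\Delta=S(G)$ with $G$ imperfect, then $G$ contains an induced odd hole or odd antihole on a vertex set $W$, and the associated odd-hole (resp.\ odd-antihole) inequality $\sum_{i\in W}x_i\le r$ is valid for $\Qc_G$ with $r\ge 2$. In either case, reflecting into $-\Pc_{\Delta}$ yields the supporting inequality $-\sum_{i\in W}x_i\le r$; and since every $\xb\in\Oc_P\subseteq[0,1]^d$ satisfies $-\sum_{i\in W}x_i\le 0<r$, the polytope $\Oc_P$ lies strictly inside this halfspace, so the corresponding face of $\Gamma$ lies entirely in $-\Pc_{\Delta}$ and is independent of $P$; it is a facet of $\Gamma$ precisely when $\sum_{i\in W}x_i\le r$ is facet-defining for $\Pc_{\Delta}$. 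Granting this, normalizing its hyperplane to height one gives a normal vector with entries $-1/r\notin\ZZ$; by Lemma \ref{facet}, $\Gamma^{\vee}$ has a non-lattice vertex, so $\Gamma$ is not reflexive, as required.

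The main obstacle is the facet-verification underlying the previous paragraph. Writing down the valid inequalities $\sum_{i\in W}x_i\le r$ is easy, but to apply Lemma \ref{facet} I must certify that the reflected inequalities genuinely support facets of $\Gamma$, i.e.\ that the minimal-nonface inequality and the induced odd-hole and odd-antihole inequalities lift to $(d-1)$-dimensional facets of $\Pc_{\Delta}$ whose primitive integral normals still have height $\ge 2$. This is exactly where the combinatorics of Lemma \ref{strong} must be converted into precise facet data; at bottom it is the assertion that a non-compressed $(0,1)$-polytope possesses a facet of lattice width at least $2$, which is the geometric content hidden in Lemma \ref{comp}. By contrast, the Gr\"obner-basis bookkeeping behind (iii) $\Rightarrow$ (ii) --- verifying that the two ``pure'' squarefree initial ideals together with the mixed binomials assemble into a squarefree $\ini_<(I_{\Gamma})$ --- is routine, though it must be executed with care to meet the hypotheses of Lemma \ref{HMOS}.
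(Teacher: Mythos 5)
Your (iii) $\Rightarrow$ (ii) direction follows the paper's route (the same revlex order with the origin variable smallest, squarefree initial ideals of $I_{\Oc_P}$ and $I_{\Qc_G}$ from compressedness, mixed generators, then Lemmata \ref{OHroots} and \ref{HMOS}); the combinatorial verification you defer as ``routine bookkeeping'' is a genuine argument in the paper --- the mixed generators are precisely $\{x_I y_S : \max(I)\cap S\neq\emptyset\}$ and one must chase a maximal element of $I_a\setminus I_{a'}$ through a degree count --- but the plan is sound.

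The genuine gap is in (i) $\Rightarrow$ (iii), and it is exactly the step you flag as ``the main obstacle'': your argument needs the reflected inequality $-\sum_{i\in W}x_i\le r$ to define a \emph{facet} of $\Gamma$, since Lemma \ref{facet} only detects vertices of $\Gamma^{\vee}$ coming from facets. This is not merely an unfinished verification --- it is false in general. The odd-hole inequality $\sum_{i\in W}x_i\le \ell$ for an induced odd hole on $W$ need not be facet-defining for $\Qc_G$ of the ambient graph $G$: if some vertex $v\notin W$ is adjacent to every vertex of $W$, the face it cuts out lies in the hyperplane $x_v=0$ and has dimension at most $d-2$; the same failure can occur for the minimal-nonface and odd-antihole inequalities. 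So the normal vector $(-1/r,\dots,-1/r,0,\dots,0)$ you exhibit may simply not be a vertex of $\Gamma^{\vee}$, and its non-integrality proves nothing. The paper sidesteps this entirely: it takes an \emph{arbitrary} facet $\Fc$ of $\Gamma$ containing the supporting face $\Hc'\cap\Gamma$ (such a facet always exists), writes its supporting hyperplane as $a_1z_1+\cdots+a_dz_d=1$, and uses only the lattice points known to lie on $\Hc'\cap\Gamma$ to constrain the $a_i$. For instance, in the odd-hole case the $2\ell+1$ maximal stable sets $S_1,\dots,S_{2\ell+1}$ of the hole each give $-\sum_{j\in S_i}a_j=1$, and since each vertex appears in exactly $\ell$ of them, summing yields $-\ell(a_1+\cdots+a_{2\ell+1})=2\ell+1$, so $a_1+\cdots+a_{2\ell+1}\notin\ZZ$ and hence some $a_i\notin\ZZ$; Lemma \ref{facet} then shows $\Gamma^{\vee}$ is not a lattice polytope. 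You should replace your facet-verification step with this averaging argument over the vertices of the supporting face.
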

\begin{proof}
((iii) $\Rightarrow$ (ii))
Suppose that $G$ is a perfect graph such that $\Pc_{\Delta}=\Qc_G$.
Let 
\[
K[\Oc\Qc] = K[\{x_{I}\}_{\emptyset \neq I \in \Jc(P)} \cup 
\{y_{S}\}_{\emptyset \neq S \in S(G)} \cup \{ z \}]
\]
denote the polynomial ring over $K$
and define the surjective ring homomorphism 
$\pi : K[\Oc\Qc] \to K[\Gamma(\Oc_P, \Qc_G)] \subset K[t_1^{\pm 1},\ldots,t_d^{\pm 1},s]$
by the following: 

\begin{itemize}
	\item
	$\pi(x_{I}) = {\bf t}^{\rho(I)}s$, 
	where $\emptyset \neq I \in \Jc(P)$;
	\item
	$\pi(y_{S}) = {\bf t}^{- \rho(S)}s$,
	where $\emptyset \neq S \in S(G)$;
	\item
	$\pi(z) = s$.
\end{itemize}
Then the toric ideal $I_{\Gamma(\Oc_P, \Qc_G)}$ of $\Gamma(\Oc_P, \Qc_G)$ is 
the kernel of $\pi$. 

Let $<_{\Oc_P}$ and $<_{\Qc_G}$ denote reverse lexicographic orders on $K[\Oc]=K[\{x_{I}\}_{\emptyset \neq I \in \Jc(P)} \cup \{ z \}]$ and $K[\Qc]=K[ 
\{y_{S}\}_{\emptyset \neq S \in S(G)} \cup \{ z \}]$
induced by
\begin{itemize}
	\item
	$z<_{\Oc_P} x_{I}$ and $z<_{\Qc_G} y_{S}$;
	\item
	$x_{I'} <_{\Oc_P} x_{I}$ if $I' \subset I$;
	\item
	$y_{S'} <_{\Qc_G} y_{S}$ if $S' \subset S$,
\end{itemize}
where $I,I' \in \Jc(P) \setminus \{\emptyset\}$ with $I \neq I'$ and $S,S' \in S(G) \setminus \{\emptyset\}$ with $S \neq S'$.
Since $\Oc_P$ and $\Qc_G$ are compressed, we know that $\text{in}_{<_{\Oc_P}}(I_{\Oc_P})$ and $\text{in}_{<_{\Qc_G}}(I_{\Qc_G})$ are squarefree.
Let $\Mc_{\Oc_P}$ and $\Mc_{\Qc_G}$ be the minimal sets of squarefree monomial generators of 
$\text{in}_{<_{\Oc_P}}(I_{\Oc_P})$ and $\text{in}_{<_{\Qc_G}}(I_{\Qc_G})$.
Then from \cite{Hibi1987}, it follows that 
\begin{equation}
\Mc_{\Oc_P}=\{x_Ix_{I'} : I,I' \in \Jc(P), I \nsubseteq I', I \nsupseteq I'\}.
\end{equation}
Let $<$ be a reverse lexicographic order on $K[\Oc\Qc]$ induced by
\begin{itemize}
	\item
	$z<y_{S} <x_{I}$;
	\item
	$x_{I'} < x_{I}$ if $I' \subset I$;
	\item
	$y_{S'} < y_{S}$ if $S' \subset S$,
\end{itemize}
where $I,I' \in \Jc(P) \setminus \{\emptyset\}$ with $I \neq I'$ and $S,S' \in S(G) \setminus \{\emptyset\}$ with $S \neq S'$,
and set
	$$\Mc=\Mc_{\Oc_P} \cup \Mc_{\Qc_G} \cup \{x_Iy_S : I \in \Jc(P),S \in S(G), \text{max}(I) \cap S \neq \emptyset\}.$$
Let $\Gc$ be a finite set of binomials belonging to $I_{\Gamma(\Oc_P,\Qc_G)}$ with ${\mathcal M} = \{ {\rm in}_{<}(g) : g \in {\mathcal G}\}$.
	
	Now, we prove that $\Gc$ is a Gr\"obner base of $ I_{\Gamma(\Oc_P,\Qc_G)}$
	with respect to $<$.
Suppose that there exists a nonzero irreducible binomial $f = u - v$  belonging to $I_{\Gamma(\Oc_P,\Qc_G)}$ such that 
	$u \notin ( \{{\rm in}_<(g) : g \in {\mathcal G}\})$ and $v\notin( \{{\rm in}_<(g) : g \in {\mathcal G}\})$ with $u \neq v$.
	Write
	\[
	u = \left(\,
	\prod_{1 \leq i \leq a} x_{I_i}^{\mu_{I_i}} 
	\right)
	\left(\, 
	\prod_{1 \leq j \leq b} y_{S_j}^{\nu_{S_j}} 
	\right),
	\, \, \, \, \, \, 
	v =
	z^{\alpha} 
	\left(\,
	\prod_{1 \leq i \leq a'} x_{I'_{i}}^{\mu'_{I'_i}} 
	\right)
	\left(\, 
	\prod_{1 \leq j \leq b'} y_{S'_j}^{\nu'_{S'_j}} 
	\right),
	\]
	where 
	\begin{itemize}
		\item $I_1,\ldots,I_a,I'_1,\ldots,I'_{a'} \in \Jc(P) \setminus \{\emptyset\}$;
		\item $S_1,\ldots,S_b,S'_1,\ldots,S'_{b'} \in S(G) \setminus \{\emptyset\}$;
		\item $a,a',b,b'$ and $\alpha$ are nonnegative integers;
		\item $\mu_{I}, \mu'_{I'}, \nu_S,\nu'_{S'}$ are positive integers.
 	\end{itemize} 
	By (1),  we may assume that
	$I_1 \subsetneq \cdots \subsetneq I_a$ and $I'_1 \subsetneq \cdots \subsetneq I'_{a'}$.
	If $(a,a')=(0,0)$, then $\text{in}_{<_{\Qc_{G}}}(f)=\text{in}_{<}(f)$.
	Hence we have $(a,a') \neq (0,0)$. 
	Assume that $I_{a} \setminus I_{a'} \neq \emptyset$.
	Then there exists a maximal element $i$ of $I_{a}$ such that $i \notin I_{a'}$.
	Hence we have
	$$\sum_{\stackrel{I \in \{I_1,\ldots,I_a\}}{i \in \max(I)}}\mu_I-\sum_{\stackrel{S \in \{S_1,\ldots,S_b\}}{i \in S}}\nu_S=-\sum_{\stackrel{S' \in \{S'_1,\ldots,S'_{b'}\}}{i \in S'}}\nu'_{S'} \leq 0.$$
	This implies that there exists a stable set $S \in \{S_1,\ldots,S_b\}$ such that $i \in S$.
	Then $x_{I_a}y_{S} \in \Mc$, a contradiction.
	Similarly, it does not follow that $I_{a'} \setminus I_{a} \neq \emptyset$.
	Therefore, by using Lemma \ref{OHroots}, $\Gc$ is a Gr\"obner base of $I_{\Gamma(\Oc_P,\Qc_G)}$
	with respect to $<$.
	Thus, by Lemma \ref{HMOS}, it follows that
	$\Gamma(\Oc_P,\Qc_G)$ is a reflexive polytope with the integer decomposition property. 
	
	((i) $\Rightarrow$ (iii))
First, suppose that there is no finite simple graph $G$ on $[d]$ with $\Delta=S(G)$.
	Then from Lemma \ref{flag}, it follows that there exists a minimal nonface $L$ of $\Delta$ with $|L| \geq 3$. 
	By renumbering the vertex set of $\Delta$, we can assume that $L=[\ell]$ with $\ell \geq 3$.
	Then the hyperplane $\Hc' \subset \RR^d$ defined by the equation $z_1+\cdots+z_{\ell}=-\ell+1$ is a supporting hyperplane of $\Gamma(\Oc_P,\Pc_{\Delta})$ since for all $i \in L$, we have $L \setminus\{i\} \in \Delta$ and $L \notin \Delta$.
	Let $\Fc$ be a facet of $\Gamma(\Oc_P,\Pc_{\Delta})$ with $\Hc' \cap \Gamma(\Oc_P,\Pc_{\Delta}) \subset \Fc$
	and $a_1z_1+\cdots+a_dz_d=1$ with each $a_i \in \RR$ the equation of the supporting hyperplane $\Hc \subset \RR^d$ with $\Fc \subset \Hc$.
	Since $-\rho(L \setminus \{i\}) \in \Hc$ for all $i \in L$, we obtain
	$-\sum_{j \in L \setminus \{i\}} a_j=1$.
		Hence $-(\ell-1)(a_1+\cdots+a_{\ell})=\ell$.
		Thus $a_1+\cdots+a_{\ell} \notin \ZZ$.
		This implies that there exists $1 \leq i \leq \ell$ such that $a_i \notin \ZZ$.
		Therefore, by Lemma \ref{facet}, it follows that $\Gamma(\Oc_P,\Pc_{\Delta})$ is not reflexive.
	
	Next, we suppose that $G$ is a non-perfect finite simple graph on $[d]$ with $\Delta=S(G)$, i.e., $\Pc_{\Delta}=\Qc_G$.
	By Lemma \ref{strong},  $G$ has either an odd hole or an odd antihole.
Suppose that  $G$ has an odd hole $C$ of length $2\ell+1$, where $\ell \geq 2$.
By renumbering the vertex set of $G$, we may assume that  the edge set of $C$ is $\{\{i,i+1\} : 1\leq i \leq 2\ell \} \cup \{1,2\ell+1\} $.
	Then the hyperplane $\Hc' \subset \RR^d$ defined by the equation $z_1+\cdots+z_{2\ell+1}= -\ell$ is a supporting hyperplane of 
	$\Gamma(\Oc_P,\Qc_G)$.
	Let $\Fc$ be a facet of $\Gamma(\Oc_P,\Qc_G)$ with $\Hc' \cap \Gamma(\Oc_P,\Qc_G) \subset \Fc$
	and $a_1z_1+\cdots+a_dz_d=1$ with each $a_i \in \RR$ the equation of the supporting hyperplane $\Hc \subset \RR^d$ with $\Fc \subset \Hc$.
	The maximal stable sets of $C$ are
	$$S_1=\{1,3,\ldots,2\ell-1\},S_2=\{2,4,\ldots,2\ell \},\ldots,S_{2\ell+1}=\{2\ell+1,2,4,\ldots,2\ell-2\}$$
	and each $i \in [2\ell+1]$ appears $\ell$ times in the above list.
	Since for each $S_i$, we have $-\sum_{j \in S_i}a_j=1$, it follows that $-\ell(a_1+\cdots+a_{2\ell+1})=2\ell +1$.
	Hence $a_1+\cdots+a_{2\ell+1} \notin \ZZ$.
	Therefore, $\Gamma(\Oc_P,\Qc_G)$ is not reflexive.
	
	Finally, we suppose that $G$ has an odd antihole $C$ such that the length of $\overline{C}$ equals $2\ell+1$, where $\ell \geq 2$.
	Similarly, we may assume that the edge set of $\overline{C}$ is $\{\{i,i+1\} : 1\leq i \leq 2\ell \} \cup \{1,2\ell+1\}$.
	Then the hyperplane $\Hc' \subset \RR^d$ defined by the equation $z_1+\cdots+z_{2\ell+1}= -2$ is a supporting hyperplane of 
	$\Gamma(\Oc_P,\Qc_G)$.
	Let $\Fc$ be a facet of $\Gamma(\Oc_P,\Qc_G)$ with $\Hc' \cap \Gamma(\Oc_P,\Qc_G) \subset \Fc$
	and $a_1z_1+\cdots+a_dz_d=1$ with each $a_i \in \RR$ the equation of the supporting hyperplane $\Hc \subset \RR^d$ with $\Fc \subset \Hc$.
	Then since the maximal stable sets of $C$ are the edges of $\overline{C}$,
   for each edge $\{i,j\}$ of $C$, we have $-(a_i+a_j)=1$.
   Hence  it follows that $-2(a_1+\cdots+a_{2\ell+1})=2\ell +1$.
	Thus $a_1+\cdots+a_{2\ell+1} \notin \ZZ$.
	Therefore, $\Gamma(\Oc_P,\Qc_G)$ is not reflexive,
	as desired.
\end{proof}

\section{Type $\Omega$}
In this section, we prove the  equivalence (iii) $\Leftrightarrow$ (iv) $\Leftrightarrow$ (v) of Theorem \ref{main}.
In fact, we prove the following proposition.
\begin{Proposition}
	\label{omega}
Let $\Delta$ be a simplicial complex on the vertex set $[d]$.  
Then the following conditions are equivalent:
\begin{itemize}
	\item[(i)] $\Omega(\Oc_P,\Pc_{\Delta})$ has the integer 
	decomposition property for some finite partially ordered set $P$ 
	on $[d]$; 
	\item[(ii)] $\Omega(\Oc_P,\Pc_{\Delta})$ has the integer 
	decomposition property for all finite partially ordered set $P$ 
	on $[d]$; 
	\item[(iii)] There exists a perfect graph $G$ on $[d]$ such that $\Pc_{\Delta}=\Qc_{G}$.
\end{itemize}
In particular, 
if $\Omega(\Oc_P,\Qc_{G})$ is a reflexive polytope with the integer decomposition property for all finite partially ordered set $P$ on $[d]$ and for all perfect graph $G$ on $[d]$.

\end{Proposition}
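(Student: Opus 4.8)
The plan is to follow the template of Proposition \ref{gamma}, proving (iii) $\Rightarrow$ (ii), the trivial (ii) $\Rightarrow$ (i), and (i) $\Rightarrow$ (iii) by contraposition. The essential new feature is the extra height coordinate. Writing $K[\tb^{\pm 1}, u^{\pm 1}, s]$ for the Laurent ring in which $u$ records the last coordinate, I would present the toric ring of $\Omega(\Oc_P,\Qc_G)$ via the polynomial ring
$$K[\{x_I\}_{\emptyset \neq I \in \Jc(P)} \cup \{y_S\}_{\emptyset \neq S \in S(G)} \cup \{w, z, v\}],$$
with $\pi(x_I) = \tb^{\rho(I)}us$, $\pi(y_S) = \tb^{-\rho(S)}u^{-1}s$, and three central variables $\pi(w) = us$, $\pi(z) = u^{-1}s$, $\pi(v) = s$ attached to the lattice points $(\mathbf 0, 1)$, $(\mathbf 0, -1)$, and the origin. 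A preliminary observation I would record first is that, since $\Oc_P$ and $\Qc_G$ are $(0,1)$-polytopes, the only lattice point of $\Omega$ at height $0$ is the origin; hence every lattice point of $\Omega$ sits at height $+1$, at height $-1$, or is the origin.

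For (iii) $\Rightarrow$ (ii) I would mimic the Gr\"obner argument of Proposition \ref{gamma} on the $x_I$ and $y_S$ blocks, using that $\Oc_P$ and $\Qc_G$ are compressed. The new ingredient is the relation $wz - v^2 \in I_{\Omega(\Oc_P,\Qc_G)}$ (since $\pi(w)\pi(z) = s^2 = \pi(v)^2$), together with the mixed relations linking $w, z$ to the $x_I, y_S$. I would choose a reverse lexicographic order in which $v$ is smallest, followed by $z, w$, then the $y_S$ and $x_I$ refined by inclusion exactly as before, and claim the initial ideal is generated by $\Mc_{\Oc_P} \cup \Mc_{\Qc_G}$, the mixed terms $x_I y_S$ with $\max(I) \cap S \neq \emptyset$, the monomial $wz$, and the analogous mixed terms pairing $w$ or $z$ with the $x_I, y_S$. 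Squarefreeness and the collision-freeness required by Lemma \ref{OHroots} I would verify by the same head-term comparison as in Proposition \ref{gamma}. Because $v$ is smallest and corresponds to the interior origin of $\RR^{d+1}$, Lemma \ref{HMOS} then yields at once that $\Omega(\Oc_P,\Qc_G)$ is reflexive with the integer decomposition property, which also settles the final ``In particular'' assertion.

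For (i) $\Rightarrow$ (iii) I would argue the contrapositive: if no perfect graph realizes $\Delta$, then $\Omega(\Oc_P,\Pc_\Delta)$ fails the integer decomposition property for \emph{every} $P$. By Lemma \ref{flag} and Lemma \ref{strong} there are three cases, $\Delta$ non-flag (minimal nonface $L$ with $|L| = \ell \geq 3$), or $\Delta = S(G)$ with $G$ containing an induced odd hole $C$, or an induced odd antihole $C$, and I would treat all three by a single uniform witness. For the relevant set $M \in \{L, C\}$ and an integer $n$ chosen below, consider
$$\mathbf q = (-\rho(M), -(n-1)) \in n\,\Omega(\Oc_P,\Pc_\Delta).$$
A height count forces any expression of $\mathbf q$ as a sum of $n$ lattice points of $\Omega$ to consist of one copy of the origin and $n-1$ points of height $-1$; no height $+1$ point can appear, which is exactly what makes the witness independent of $P$. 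Hence $\mathbf q$ decomposes if and only if $\rho(M)$ is a sum of $n-1$ faces of $\Delta$, i.e. $\rho(M) \in (n-1)\Pc_\Delta$. So it suffices to pick $n$ with $\rho(M) \in (n - \tfrac12)\Pc_\Delta$ (which gives $\mathbf q \in n\Omega$, taking $\mathbf 0 \in \Oc_P$ and $\tfrac{1}{n-1/2}\rho(M) \in \Pc_\Delta$) but $\rho(M) \notin (n-1)\Pc_\Delta$. Using the facet inequalities $\langle \mathbf 1_L, \cdot \rangle \leq \ell - 1$, the odd hole inequality $\langle \mathbf 1_C, \cdot \rangle \leq \ell$, and the odd antihole inequality $\langle \mathbf 1_C, \cdot \rangle \leq 2$ respectively, the choices $n = 2$, $n = 3$, and $n = \ell + 1$ work: the upper memberships are exhibited by averaging the maximal faces (resp. the maximal stable sets, resp. the two-element stable sets), and the lower non-memberships hold because $\rho(M)$ violates the displayed inequality at the $(n-1)$-st dilate. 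Concretely the obstruction reads: $L \notin \Delta$; the odd cycle $C$ is not $2$-colorable; and the $2\ell+1$ vertices of $C$ admit no partition into $\ell$ stable sets each of size at most two.

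The main obstacle I anticipate is the Gr\"obner computation in (iii) $\Rightarrow$ (ii): identifying the \emph{complete} minimal generating set of the initial ideal once the central variables $w, z, v$ and the height variable $u$ are present, and checking that distinct standard monomials never share an image under $\pi$. This is more delicate than in Proposition \ref{gamma} because the binomial $wz - v^2$ and the mixed terms carrying $u^{\pm 1}$ create extra $S$-pairs to reduce; nonetheless it is routine bookkeeping rather than a conceptual difficulty. The genuinely new idea sits in the converse direction, namely the single family $\mathbf q = (-\rho(M), -(n-1))$, whose rigid height profile defeats the integer decomposition property uniformly in $P$.
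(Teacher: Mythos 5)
Your proposal follows the paper's proof essentially verbatim: the Gr\"obner argument for (iii) $\Rightarrow$ (ii) is the paper's own (your $w$, $z$, $v$ are the paper's $x_{\emptyset}$, $y_{\emptyset}$, $z$, obtained by letting the index sets include the empty set), and your uniform witness $(-\rho(M),-(n-1))$ with $n=2,3,\ell+1$ is exactly the paper's $\ab$, $\bb$, $\cb$ in the three cases of (i) $\Rightarrow$ (iii), verified by the same height count. One small correction to your sketch of the initial ideal: the only generator beyond those of the $\Gamma$-case is $wz$ itself; the ``mixed terms pairing $w$ or $z$ with the $x_I,y_S$'' (i.e.\ $x_{\emptyset}y_S$ and $x_Iy_{\emptyset}$) are in fact standard monomials and must not be added.
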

\begin{proof}
		((iii) $\Rightarrow$ (ii))
		Suppose that $G$ is a perfect graph on $[d]$ such that $\Pc_{\Delta}=\Qc_{G}$.
	Let 
	\[
	K[\Oc\Qc] = K[\{x_{I}\}_{ I \in \Jc(P)} \cup 
	\{y_{C}\}_{ C \in S(G)} \cup \{ z \}]
	\]
	denote the polynomial ring over $K$
	and define the surjective ring homomorphism 
	$\pi : K[\Oc\Qc] \to K[\Omega(\Oc_P, \Qc_G)] \subset K[t_1^{\pm 1},\ldots,t_{d+1}^{\pm 1},s]$
	by the following: 
	
	\begin{itemize}
		\item
		$\pi(x_{I}) = {\bf t}^{\rho(I)}t_{d+1}s$, 
		where $I \in \Jc(P)$;
		\item
		$\pi(y_{S}) = {\bf t}^{- \rho(S)}t_{d+1}^{-1}s$,
		where $S \in S(G)$;
		\item
		$\pi(z) = s$.
	\end{itemize}
	Then the toric ideal $I_{\Omega(\Oc_P, \Qc_G)}$ of $\Omega(\Oc_P, \Qc_G)$ is 
	the kernel of $\pi$. 
	
	Let $<_{\Oc_P}$ and $<_{\Qc_G}$ denote reverse lexicographic orders on $K[\Oc]=K[\{x_{I}\}_{I \in \Jc(P)}]$ and $K[\Qc]=K[ 
	\{y_{S}\}_{ S \in S(G)} ]$
	induced by
	\begin{itemize}
		\item
		$x_{I'} <_{\Oc_P} x_{I}$ if $I' \subset I$;
		\item
		$y_{S'} <_{\Qc_G} y_{S}$ if $S' \subset S$,
	\end{itemize}
where $I,I' \in \Jc(P)$ with $I \neq I'$ and $S, S' \in S(G)$ with $S \neq S'$.
Since $\Oc_P$ and $\Qc_G$ are compressed, we know that 
	$\text{in}_{<_{\Oc_P}}(I_{\Oc_P})$ and $\text{in}_{<_{\Qc_G}}(I_{\Qc_G})$ are squarefree.
	Let	$\Mc_{\Oc_P}$ and $\Mc_{\Qc_G}$ be the minimal sets of squarefree monomial generators of 
	$\text{in}_{<_{\Oc_P}}(I_{\Oc_P})$ and $\text{in}_{<_{\Qc_G}}(I_{\Qc_G})$.
	 Then it follows that 
	\begin{equation}
		\Mc_{\Oc_P}=\{x_Ix_{I'} : I,I' \in \Jc(P), I \nsubseteq I', I \nsupseteq I'\}.
	\end{equation}
	Let $<$ be a reverse lexicographic order on $K[\Oc\Qc]$ induced by
	\begin{itemize}
		\item
		$z<y_{S} <x_{I}$;
		\item
		$x_{I'} < x_{I}$ if $I' \subset I$;
		\item
		$y_{S'} < y_{S}$ if $S' \subset S$,
	\end{itemize}
where $I,I' \in \Jc(P)$ with $I \neq I'$ and $S, S' \in S(G)$ with $S \neq S'$,
	and set
	$$\Mc=\Mc_{\Oc_P} \cup \Mc_{\Qc_G} \cup \{x_Iy_S : I \in \Jc(P),S \in S(G), \text{max}(I) \cap S \neq \emptyset\} \cup \{x_{\emptyset}y_{\emptyset}\}.$$
	Let $\Gc$ be a finite set of binomials belonging to $I_{\Omega(\Oc_P,\Qc_G)}$ with ${\mathcal M} = \{ {\rm in}_{<}(g) : g \in {\mathcal G}\}$.
	
	Now, we prove that $\Gc$ is a Gr\"obner base of $I_{\Omega(\Oc_P,\Qc_G)}$
	with respect to $<$.
	Suppose that there exists a nonzero irreducible binomial $f = u - v$  belonging to $I_{\Omega(\Oc_P,\Qc_G)}$ such that 
	$u \notin ( \{{\rm in}_<(g) : g \in {\mathcal G}\})$ and $v\notin( \{{\rm in}_<(g) : g \in {\mathcal G}\})$ with $u \neq v$.
	Write
	\[
	u = \left(\,
	\prod_{1 \leq i \leq a} x_{I_i}^{\mu_{I_i}} 
	\right)
	\left(\, 
	\prod_{1 \leq j \leq b} y_{S_j}^{\nu_{S_j}} 
	\right),
	\, \, \, \, \, \, 
	v =
	z^{\alpha} 
	\left(\,
	\prod_{1 \leq i \leq a'} x_{I'_{i}}^{\mu'_{I'_i}} 
	\right)
	\left(\, 
	\prod_{1 \leq j \leq b'} y_{S'_j}^{\nu'_{S'_j}} 
	\right),
	\]
	where 
	\begin{itemize}
		\item $I_1,\ldots,I_a,I'_1,\ldots,I'_{a'} \in \Jc(P)$;
		\item $S_1,\ldots,S_b,S'_1,\ldots,S'_{b'} \in S(G)$;
		\item $a,a',b,b'$ and $\alpha$ are nonnegative integers with $(a,a') \neq 0$;
		\item $\mu_{I}, \mu'_{I'}, \nu_S,\nu'_{S'}$ are positive integers.
	\end{itemize}  
	By (2),  we may assume that
	$I_1 \subsetneq \cdots \subsetneq I_a$ and $I'_1 \subsetneq \cdots \subsetneq I'_{a'}$
	
	%
	By the same way of the proof of Proposition \ref{gamma}, 
	we know that $a=0$ and $I_{a'}=\emptyset$, or $a'=0$ and $I_{a}=\emptyset$.
    Suppose that $a=0$ and $I_{a'}=\emptyset$.
    Then by focusing on the degree of $t^{d+1}$ and $s$ of $\pi(u)$ and $\pi(v)$,
    we have
    	$$-\sum_{1 \leq j \leq b}\nu_{S_j}=\mu'_{\emptyset}-\sum_{1\leq j \leq b'}\nu'_{S'_{j}},$$
    	$$\sum_{1 \leq j \leq b}\nu_{S_j}=\alpha+\mu'_{\emptyset}+\sum_{1 \leq j \leq b'}\nu'_{S'_j}.$$
    	Hence $0=\alpha+2\mu'_{\emptyset}>0$, a contradiction.
    	
    	Suppose that $a'=0$ and $I_{a}=\emptyset$.
    	Then we have 
    		$$\mu_{\emptyset}-\sum_{1 \leq j \leq b}\nu_{S_j}=-\sum_{1 \leq j \leq b'}\nu'_{S'_j},$$
    		$$\mu_{\emptyset}+\sum_{1 \leq j \leq b}\nu_{S_j}=\alpha+\sum_{1 \leq j \leq b'}\nu'_{S'_j}.$$
    		Hence one obtains $2\mu_{\emptyset}=\alpha$.
    		By focusing on $y_{\emptyset}^{\mu_{\emptyset}} \cdot f$,
    		it is easy to show that $$f'=\left(\, 
    		\prod_{1 \leq j \leq b} y_{S_j}^{\nu_{S_j}} 
    		\right)-y_{\emptyset}^{\mu_{\emptyset}}\left(\, 
    		\prod_{1 \leq j \leq b'} y_{S'_j}^{\nu'_{S'_j}} 
    		\right) \in I_{\Omega(\Oc_P,\Qc_G)}.$$
    		Since $x_{\emptyset}y_{\emptyset} \in \Mc$, for each $i$, $S_{i} \neq \emptyset$.
    		Hence $\text{in}_{<}(f')= 
    		\prod_{1 \leq j \leq b} y_{S_j}^{\nu_{S_j}}$ and $\text{in}_{<}(f')$ divides $u$, a contradiction.
    		Therefore, by Lemma \ref{OHroots},  $\Gc$ is a Gr\"obner base of $ I_{\Omega(\Oc_P,\Qc_G)}$
    		with respect to $<$.
    		Thus, by Lemma \ref{HMOS}, it follows that
    		$\Omega(\Oc_P,\Qc_G)$ is a reflexive polytope with the integer decomposition property. 
    		
    		((i) $\Rightarrow$ (iii))
    			First, suppose that $\Delta$ is not flag.
    			Then we can assume that $L=[\ell]$ with $\ell \geq 3$ is a minimal nonface of $\Delta$.
    			For $1 \leq i \leq \ell$, we set $\ub_i=-\sum_{j \in L \setminus \{i\}}\eb_j-\eb_{d+1}$.
    			Since $L \setminus \{i\} \in \Delta$ for all $i \in L$, 
    			each $\ub_i$ is a vertex of $\Omega(\Oc_P,\Pc_{\Delta})$.
    			Then one has
    			$$\ab=\dfrac{\ub_1+\cdots+\ub_{\ell}+\eb_{d+1}}{\ell-1}=-(\eb_1+\cdots+\eb_{\ell}+\eb_{d+1}).$$
    			Since $1 < (\ell+1)/(\ell-1) \leq 2$,  one has $\ab \in 2\Omega(\Oc_P, \Pc_{\Delta})$.
    			Now, suppose that there exist $\ab_1,\ab_2 \in \Omega(\Oc_P, \Pc_{\Delta}) \cap \ZZ^{d+1}$
    			such that $\ab=\ab_1+\ab_2$.
    			Then we have $\ab \in (-\Pc_{\Delta}\times \{-1\})$.
    			However, since $L \notin \Delta$, this is a contradiction.
    			Hence $\Omega(\Oc_P, \Pc_{\Delta})$ does not have the integer decomposition property.
    			
    		Next, suppose that $G$ is a non-perfect finite simple graph on $[d]$ with $\Delta=S(G)$.
    		Now, we consider the case where $G$ has an odd hole $C$ of length $2\ell+1$, where $\ell \geq 2$. We may assume that the edge set of $C$ is $\{\{i,i+1\} : 1\leq i \leq 2\ell \} \cup \{1,2\ell+1\} $.
    		Let $S_1,\ldots,S_{2\ell+1}$ be the maximal stable sets of $C$ and 
    		for $1\leq i \leq 2\ell+1$,  
    		$\vb_{i}=-(\sum_{j \in S_i}\eb_j+\eb_{d+1})$.
    		Then one has
    		$$\bb=\dfrac{\vb_1+\cdots+\vb_{2\ell+1}+\eb_{d+1}}{\ell}=-(\eb_1+\cdots+\eb_{2\ell+1}+2\eb_{d+1}).$$
    		Since $2 <(2\ell+2)/\ell \leq 3$, $\bb \in 3\Omega(\Oc_P, \Qc_{G})$.
    		Now, suppose that there exist $\bb_1,\bb_2,\bb_3 \in \Omega(\Oc_{P}, \Qc_{G}) \cap \ZZ^{d+1}$ such that $\bb=\bb_1+\bb_2+\bb_3$.
    		Then we may assume that $\bb_1,\bb_2 \in (-\Qc_{C}\times \{-1\})$ and $\bb_3={\bf 0}$. 
    		However, since the maximal cardinality of the stable sets of $C$ equals $\ell$, this is a contradiction. Hence $\Omega(\Oc_P,\Qc_G)$ does not have the integer decomposition property.
    		
    		Finally, suppose $G$ has an odd antihole $C$ such that the length of  $\overline{C}$ equals $2\ell+1$, where $\ell \geq 2$.
    		Similarly, we may assume that the edge set of $\overline{C}$ is $\{\{i,i+1\} : 1\leq i \leq 2\ell \} \cup \{1,2\ell+1\} $.
    		For $1 \leq i \leq 2\ell$, we set $\wb_i=-(\eb_i+\eb_{i+1}+\eb_{d+1})$ and set $\wb_{2\ell+1}=-(\eb_1+\eb_{2\ell+1}+\eb_{d+1})$.
    		Then one has
    		$$\cb=\dfrac{\wb_1+\cdots+\wb_{2\ell+1}+\eb_{d+1}}{2}=-(\eb_1+\cdots+\eb_{2\ell+1}+\ell\eb_{d+1})$$
    		and $\cb \in (\ell+1)\Omega(\Oc_{P}, \Qc_{G})$.
    		Now suppose that there exist $\cb_1,\ldots,\cb_{\ell+1} \in \Omega(\Oc_P,\Qc_G) \cap \ZZ^{d+1}$ such that $\cb=\cb_1+\cdots+\cb_{\ell+1}$.
    		Then we may assume that $\cb_1,\ldots,\cb_{\ell} \in (-\Qc_{G} \times \{-1\})$ and $\cb_{\ell+1}={\bf 0}$. 
    		However, since the maximal cardinality of the stable sets of $C$  equals $2$, this is a contradiction.
    		Hence $\Omega(\Oc_P,\Qc_G)$ does not have the integer decomposition property, as desired.
\end{proof}

\section{Ehrhart $\delta$-polynomial}
In this section, we show Theorem \ref{deltath}.
\begin{proof}[Proof of Theorem \ref{deltath}]
	Let 
	\[
K[\Cc\Qc] = K[\{x_{\max(I)}\}_{\emptyset \neq I \in \Jc(P)} \cup 
	\{y_{S}\}_{\emptyset \neq S \in S(G)} \cup \{ z \}]
	\]
	denote the polynomial ring over $K$
	and define the surjective ring homomorphism 
	$\pi : K[\Cc\Qc] \to K[\Gamma(\Cc_P, \Qc_G)] \subset K[t_1^{\pm 1},\ldots,t_d^{\pm 1},s]$
	by the following: 
	\begin{itemize}
		\item
		$\pi(x_{\max(I)}) = {\bf t}^{\rho(\max(I))}s$, 
		where $\emptyset \neq I \in \Jc(P)$;
		\item
		$\pi(y_{S}) = {\bf t}^{- \rho(S)}s$,
		where $\emptyset \neq S \in S(G)$;
		\item
		$\pi(z) = s$.
	\end{itemize}
	Then the toric ideal $I_{\Gamma(\Cc_P, \Qc_G)}$ of $\Gamma(\Cc_P, \Qc_G)$ is 
	the kernel of $\pi$. 
	
	Let $<_{\Cc_P}$ and $<_{\Qc_G}$ denote reverse lexicographic orders on $K[\Cc]=K[\{x_{\max(I)}\}_{\emptyset \neq I \in \Jc(P)} \cup \{ z \}]$ and $K[\Qc]=K[ 
	\{y_{S}\}_{\emptyset \neq S \in S(G)} \cup \{ z \}]$
	induced by
	\begin{itemize}
		\item
		$z<_{\Cc_P} x_{\max(I)}$ and $z<_{\Qc_G} y_{S}$;
		\item
		$x_{\max(I')} <_{\Cc_P} x_{\max(I)}$ if $I' \subset I$;
		\item
		$y_{S'} <_{\Qc_G} y_{S}$ if $S' \subset S$,
	\end{itemize}
where $I,I' \in \Jc(P) \setminus \{\emptyset \}$ with $I \neq I'$ and $S,S' \in S(G) \setminus \{\emptyset\}$ with $S \neq S'$.
Since $\Cc_P$ and $\Qc_G$ are compressed,
we know that
$\text{in}_{<_{\Cc_P}}(I_{\Cc_P})$ and $\text{in}_{<_{\Qc_G}}(I_{\Qc_G})$ are squarefree.
Let $\Mc_{\Cc_P}$ and $\Mc_{\Qc_G}$ be the minimal sets of squarefree monomial generators of 
$\text{in}_{<_{\Cc_P}}(I_{\Cc_P})$ and $\text{in}_{<_{\Qc_G}}(I_{\Qc_G})$.
Then from \cite{chain}, it follows that 
	\begin{equation}
	\Mc_{\Cc_P}=\{x_{max(I)}x_{\max(I')} : I,I' \in \Jc(P), I \nsubseteq I', I \nsupseteq I'\}.
	\end{equation}
	Let $<$ be a reverse lexicographic order on $K[{\bf x}, {\bf y} ,z]$ induced by
	\begin{itemize}
		\item
		$z<y_{S} <x_{\max(I)}$;
		\item
		$x_{\max(I')} < x_{\max(I)}$ if $I' \subset I$;
		\item
		$y_{S'} <  y_{S}$ if $S' \subset S$,
	\end{itemize}
where $I,I' \in \Jc(P) \setminus \{\emptyset \}$ with $I \neq I'$ and $S,S' \in S(G) \setminus \{\emptyset\}$ with $S \neq S'$,
	and set
	$$\Mc_{\Cc\Qc}=\Mc_{\Cc_P} \cup \Mc_{\Qc_G} \cup \{x_{\max(I)}y_S : I \in \Jc(P),S \in S(G), \text{max}(I) \cap S \neq \emptyset\}.$$
	Let $\Gc$ be a finite set of binomials belonging to $I_{\Gamma(\Cc_P,\Qc_G)}$ with ${\mathcal M}_{\Cc\Qc} = \{ {\rm in}_{<}(g) : g \in {\mathcal G}\}$.
	By the same way of the proof of Proposition \ref{gamma}, we can prove that $\Gc$ is a Gr\"obner base of ${\Omega(\Cc_P,\Qc_G)}$ with respect to $<$.
	
	Now, use the same notation as in the proof of Proposition \ref{gamma}.
	Set
	$$R_{\Oc\Qc}=\dfrac{K[\Oc\Qc]}{(M_{\Oc\Qc})}, R_{\Cc\Qc}=\dfrac{K[\Cc\Qc]}{(M_{\Cc\Qc})}.$$
	Then by Lemma \ref{lem:hilbert}, the Hilbert function of $K[\Gamma(\Oc_P,\Qc_G)]$ equals that of $R_{\Oc\Qc}$, and the Hilbert function of $K[\Gamma(\Cc_P,\Qc_G)]$ equals that of $R_{\Cc\Qc}$.
	Moreover, it is easy to see that the ring homomorphism $\phi : R_{\Oc\Qc} \to R_{\Cc\Qc}$ by setting
	$\phi(x_I)=x_{\max(I)}, \phi(y_S)=y_S$ and $\phi(z) =z$ is an isomorphism, where $I \in \Jc(P) \setminus \{\emptyset\}$ and $S \in S(G) \setminus \{\emptyset\}$.
	Hence since $\Gamma(\Oc_P,\Qc_G)$ and $\Gamma(\Cc_P,\Qc_G)$ have the integer decomposition property, we obtain
	$$\delta(\Gamma(\Oc_P,\Qc_G),\lambda)=\delta(\Gamma(\Cc_P,\Qc_G),\lambda).$$
	Similarly, one has
	$$\delta(\Omega(\Oc_P,\Qc_G),\lambda)=\delta(\Omega(\Cc_P,\Qc_G),\lambda).$$
	Moreover, since the chain polytope $\Cc_P$ is a stable set polytope of a perfect graph, by Lemma \ref{delta}, it follows that 
	$$\delta(\Omega(\Oc_P,\Qc_G),\lambda)=(1+\lambda) \cdot \delta(\Gamma(\Oc_P,\Qc_G),\lambda),$$
	as desired.
\end{proof}

\end{document}